\newtheorem{thm}{Theorem}[section]
\newtheorem{lem}[thm]{Lemma}
\newtheorem{cor}[thm]{Corollary}
\theoremstyle{definition}
\theoremstyle{remark}
\newtheorem{re}[thm]{Remark}
\numberwithin{equation}{section}
\newcommand*{\affaddr}[1]{#1}
\theoremstyle{definition}
\newtheorem{definition}{Definition}[section]
\theoremstyle{remark}
\newcommand{\tran}{^{\mathstrut\scriptscriptstyle\top}} 
\newcommand{\RN}[1]{%
  \textup{\uppercase\expandafter{\romannumeral#1}}%
}
\newcommand{\Rey}{\mathcal{R}e }
\newcommand{\x}{\cdot}
\newcommand{\grad}{\nabla}
\begin{document}

\title{Analysis of mesh effects on turbulent flow statistics}


\author{%
Ali Pakzad\\
\affaddr{alp145@pitt.edu\\Department of Mathematics}\\
\affaddr{ University of Pittsburgh}%
}

\maketitle
\setcounter{tocdepth}{1}

\begin{abstract}
Turbulence models, such as the Smagorinsky model herein, are used to represent the energy lost from resolved to under-resolved scales due to the energy cascade (i.e. non-linearity). Analytic estimates of the energy dissipation rates of a few turbulence models have recently appeared, but none (yet) study the  energy dissipation restricted to resolved scales, i.e. after spatial  discretization with $h >$ micro scale. We do so herein for the Smagorinsky model.  Upper bounds  are derived on the \textit{computed} time-averaged energy dissipation rate, $\langle  \varepsilon (u^h)\rangle$, for an  under-resolved mesh $h$ for  turbulent shear flow. For  coarse  mesh size $ \mathcal{O}(\Rey^{-1}) < h < L $, it is proven, 
$$
\langle  \varepsilon (u^h)\rangle\leq \big[   (\frac{C_s\, \delta}{h})^2+ \frac{L^5}{(C_s \delta)^4\,h}+\frac{L^{\frac{5}{2}}}{(C_s\, \delta)^{4}}\, {h^{\frac{3}{2}}}\big]\, \frac{U^3}{L}, 
$$
where $U$ and $L$ are global velocity and length scale and $C_s$ and $\delta$ are  model parameters.
This upper bound  being independent of the viscosity at high Reynolds number, is  in accord with the the equilibrium dissipation law (Kolmogrov's conventional turbulence theory). This estimate  suggests over-dissipation   for any  of $C_s>0$ and $\delta>0$,  consistent with numerical evidence on the effects of model viscosity (without wall damping function). Moreover, the analysis indicates that the turbulent boundary layer is a  more important length scale for shear flow than the Kolmogorov microscale.
\end{abstract}

\section{introduction}

Because of the limitations of computers, we are forced to struggle with the meaning of  under-resolved flow simulations. Turbulence models are introduced to account for sub-mesh scale effects, when solving fluid flow problems numerically.  One key in getting a good approximation for a turbulent model is to correctly calibrate the energy dissipation $ \varepsilon (u)$ in the model on an under-resolved mesh \cite{L02}.  The energy dissipation rates of various turbulent models have been analyzed assuming infinite resolution (i.e. for the continuous model e.g. \cite{DLPRSZ18},  \cite{DC92}, \cite{L16}, \cite{L02}, \cite{M94}, \cite{A.P},  \cite{A.P19}, \cite{W00} and   \cite{W97}). The  question explored herein is: What is the  time-averaged energy dissipation rate $\langle  \varepsilon (u^h)\rangle$,  when $u^h$ is an approximation of $u$ on a given mesh $h$ (fixed computational cost) for the Smagorinsky model?

 Numerical simulations of turbulent flows  include only the motion of eddies above some length scale $\delta$ (which depends on attainable resolution).  The state of motion in turbulence is too complex to allow for a detailed description of the fluid velocity.  Benchmarks are required to validate simulations' accuracy. Two natural benchmarks for numerical simulations are the kinetic energy in the eddies of size $> \mathcal{O}(\delta)$ and the energy dissipation rates of eddies in size $> \mathcal{O}(\delta)$ (e.g. \cite{MKM98} and \cite{IF04}).  Motivated from here,  we focus on the calculation of the  time-averaged energy dissipation rates of motions larger than $\delta$. In the large-eddy simulation (LES), predicting how flow statistics (e.g. time-averaged) depend on the grid size $h$ and the filter size $\delta$ is listed as one of  ten important questions by Pope \cite{P04}.

Consider the Smagorinsky model (\ref{SM}) subject to a boundary-induced shear (\ref{BC}) in the flow domain $\Omega = (0,L)^3$,
\begin{equation}\label{SM}
\begin{split}
  u_t+u &\cdot \nabla u -\nu \Delta u + \nabla p - \grad \x ((C_s \delta)^2 |\grad u| \grad u)=0,\\
 &  \grad \cdot u =0 \hspace{10pt} \hspace{10pt}\mbox{and}\hspace{10pt} u(x,0)=u_0  \hspace{10pt}   \mbox{in}\,\, \Omega.
 \end{split}
   \end{equation}
 In (\ref{SM})  $u$ is the velocity field, $p$ is the pressure, $\nu$ is  the kinematic viscosity, $\delta$ is the  turbulence-resolution length scale (associated with the mesh size $h$) and  $C_s \simeq 0.1$  is the standard model parameter (see Lilly \cite{L67} for more details).  We consider shear flow, containing a turbulent boundary layer, in a domain that is simplified to permit more precise analysis. $L$-periodic boundary conditions in $x$ and $y$ directions are imposed. $z=0$ is a fixed wall and the wall $z=L$ moves with velocity $U$, 

 \begin{equation} \label{BC}
\begin{split}
 L-\mbox{periodic boundary con}&\mbox{dition in  }\, x \mbox{ and }\, y \mbox{ direction,}\\
   u(x,y,0,t)=(0,0,0)\tran \hspace{18pt}&\mbox{and}\hspace{18pt} u(x,y,L,t)=(U,0,0)\tran.
\end{split}
\end{equation}

\begin{figure}[b]
\includegraphics[scale=0.5]{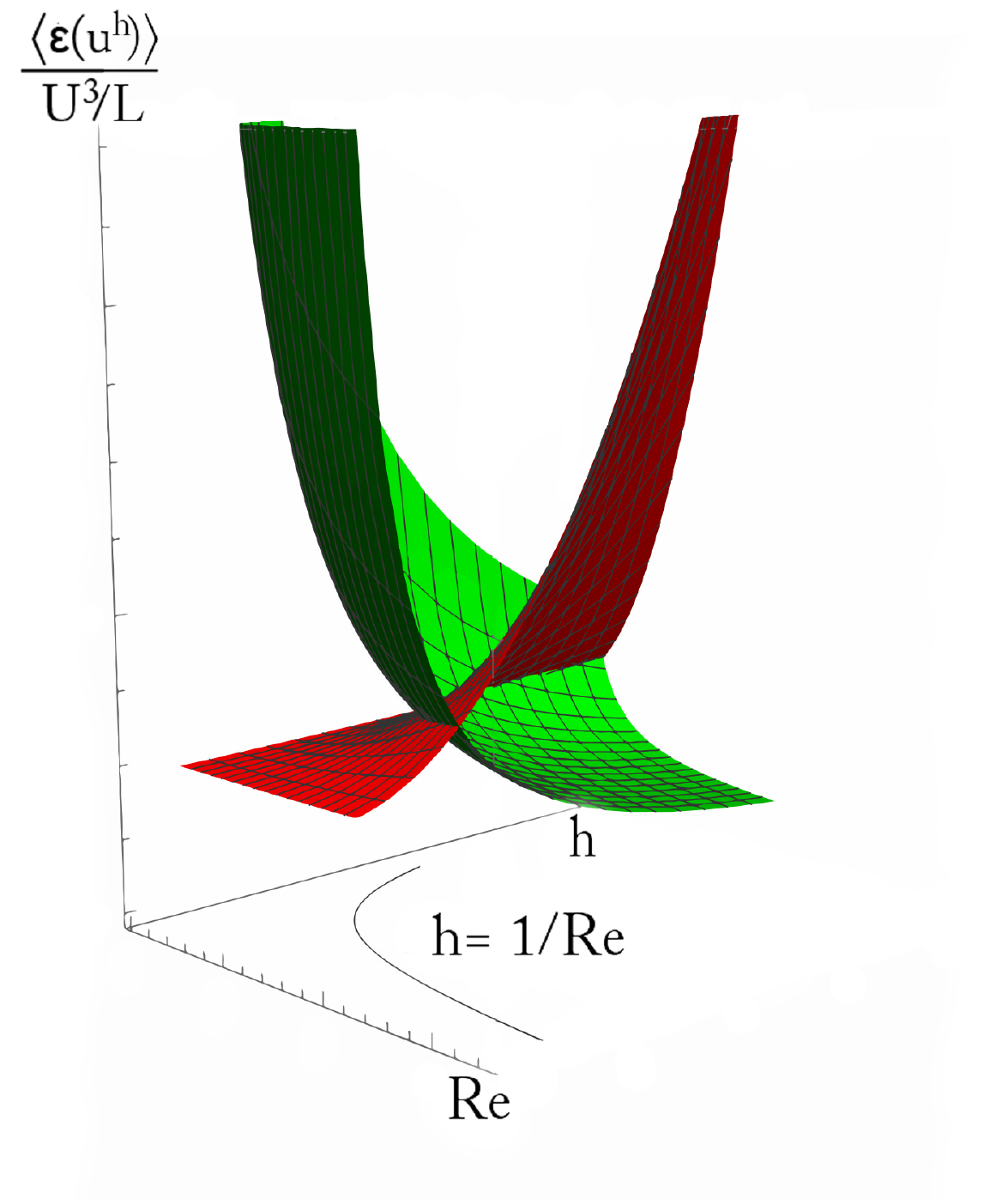}
\caption{Dissipation vs.  mash size $h$ and  the Reynolds number $\Rey$.}
\label{3plots}
\end{figure} 

 To  disregard the effects of the time discretization on dissipation, we consider (\ref{SM})  continuous in time and  discretized in space  by a standard finite element method (see (\ref{FEM-SM}))  since the effects of the spatial  discretization, and not the time discretization,  on dissipation is the subject of study here. In Theorem \ref{thm1}, we first estimate  $\langle  \varepsilon (u^h)\rangle$  for fine enough mesh that no model to be necessary ($ h < \mathcal{O}(\Rey^{-1})\, L$).  Since models are used for meshes coarser than a DNS, we next investigate  (\ref{SM}) on an arbitrary coarse mesh. Theorem \ref{thm2} presents the analysis of $\langle  \varepsilon (u^h)\rangle$ on an under-resolved mesh.  The results in Theorems \ref{thm1} and \ref{thm2} can be summarized as,

 \vspace{12pt}

\begin{equation} \label{summery}
\frac{\langle  \varepsilon (u^h)\rangle}{U^3/L}  \simeq
\left\{
	\begin{array}{ll}
       1+  (\frac{c_s \delta}{L})^2\, \Rey ^2   & \mbox{ for }\hspace{10pt} h < \mathcal{O}(\Rey^{-1}) \, L   \vspace{7pt}\\
     
       		  \frac{1}{\Rey} \,\frac{L}{h}+   (\frac{C_s\, \delta}{h})^2+ \frac{L^5}{(C_s \delta)^4\,h}+\frac{L^{\frac{5}{2}}}{(C_s\, \delta)^{4}}\, {h^{\frac{3}{2}}} & \mbox{ for  } \hspace{10pt} h \geq \mathcal{O}(\Rey^{-1}) \, L
	\end{array}.
\right.
\end{equation}

 \vspace{12pt}

Considering $\delta = h$ in (\ref{summery}), which is the common choice in  LES \cite{P04},  we speculate that  $\frac{\langle  \varepsilon (u^h)\rangle}{U^3/L} $ varies with $\Rey$ and $h$ as depicted qualitatively in Figure \ref{3plots}.

 In the \textit{under-resolved} case $(h \geq \Rey^{-1} \, L)$, the upper bound  (\ref{summery}) being independent of the viscosity at high Reynolds number, is in accord with the Kolmogrov's conventional turbulence theory. He  argued that at large Reynolds number, the energy dissipation rate per unit volume should be independent of the kinematic viscosity and hence, by a dimensional consideration, the energy dissipation rate per unit volume must take the form constant times $\frac{U^3}{L}$. On the other hand, the weak $\Rey$ dependence in the \textit{under-resolved} case (that vanishes as $\Rey\rightarrow \infty$) is consistent with  Figure \ref{Tritton}, adopted from \cite{F95}. This figure shows how dissipation coefficient varies with Reynolds number for a  circular cylinder based on experimental data quoted in \cite{T88}.  Tritton \cite{T88} observed  that dissipation coefficient  behaves as $\Rey^{-1}$  for  small $\Rey$,  and at high $\Rey$  stays approximately constant. Theorem \ref{thm2}  is also consistent  with the recent results in \cite{MBYL15} derived through structure function theories of turbulence.

 For the fully-resolved case $(h < \Rey^{-1} \, L)$, the estimate (\ref{summery}) is consistent as $\Rey \rightarrow \infty$ and $\delta = h \simeq \mathcal{O}(\Rey^{-1}) \rightarrow 0$  with the Kolmogrov's conventional turbulence theory. It is also in accord with the rate proven for the Navier-Stokes equations \cite{DC92} and the Smagorinsky model \cite{L02}.

 Corollaries \ref{Cor1} and \ref{Cor2} show that the estimate (\ref{summery}) \textit{suggests} over-dissipation of the model for any choice of $C_s>0$ and $\delta >0$. In  other words, the constant $C_s$  causes excessive damping of large-scale fluctuation, which agrees with the computational experience  (p.247 of Sagaut \cite{Sag} and \cite{BIL06}). There are several model refinements to reduce this over dissipation such as the damping function \cite{A.P}. 
 \begin{re}
Shear flow has two natural microscales. The Kolmogorov microscale is $\eta \simeq \Rey^{-\frac{3}{4}}\, L$, and describes the size of the smallest persistent motion away from walls. The second, turbulent boundary layer, length is $\Rey^{-1}\, L$. Our analysis here, and (\ref{summery}), indicates this latter scale is the more important one for shear flow. 

\end{re}

In Sections 2, we collect necessary mathematical tools. In Section 3, the major results are proven. We end this report  with numerical illustrations  and conclusions in Sections 4 and 5.

\begin{figure}[t]
\includegraphics[scale=0.17]{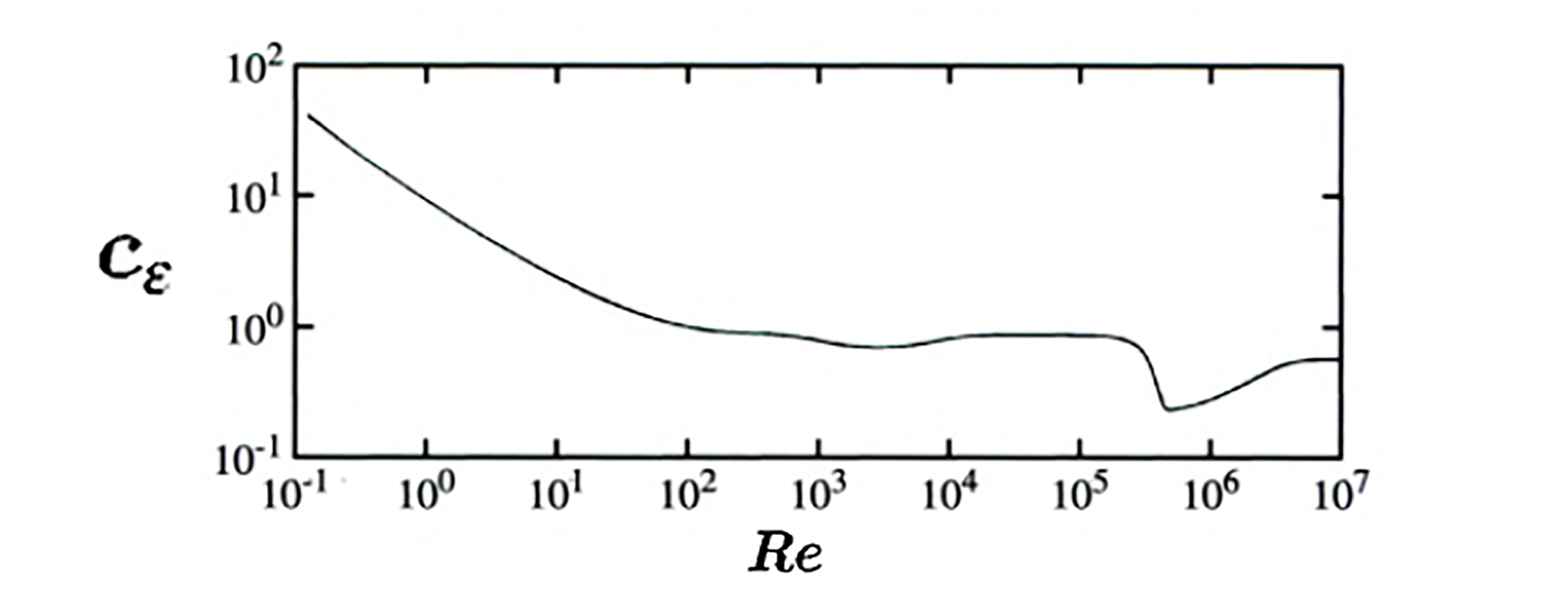}
\caption{Variation of dissipation coefficient $C_{\epsilon}$ with Reynolds number  $\Rey$ for a circular cylinder,  based on exprimental data adopted from \cite{F95}.}
\label{Tritton}
\end{figure}


\subsection{Motivation and Related Works}
  In turbulence, dissipation predominantly occurs  at small scales. Once a mesh (of size $h$) is selected, an eddy viscosity term $\grad \x ((C_s \delta)^2 |\grad u| \grad u)$  in (\ref{SM})  with $\delta = \mathcal{O}(h)$ is added to the Navier-Stokes equations (NSE), $C_s=0$ in (\ref{SM}),  to model this extra dissipation which can not be captured by the NSE viscous term on an under-resolved mesh.

  Turbulence is about prediction of velocities' averages  rather than the point-wise velocity. One commonly used average in turbulent flow modeling is time-averaging. Time averages seem to be predictable even when dynamic flow behavior over bounded time intervals is irregular \cite{F95}.  Time averaging is defined in terms of the limit superior ($\limsup$) of a function as, 
$$\langle\psi(\cdot)\rangle= \limsup\limits_{T\rightarrow\infty}  \, \frac{1}{T} \int_{0}^{T} \psi(t)\, dt.$$

 The classical \textbf{Equilibrium Dissipation Law} (Kolmogrov's conventional turbulence theory) is based on the concept of the energy cascade. Energy is input into the largest scales of the flow,  then the kinetic energy cascade from large to small scale of motions. When it reaches a scale small enough for viscous dissipation to be effective, it dissipates mostly into heat (Richardson 1922, \cite{R22}). Since viscous dissipation is negligible through this cascade, the energy dissipation rate is related then to the power input to the largest scales at the first step in the cascade. These largest eddies have energy $\frac{1}{2} U^2$ and time scale $\tau=\frac{L}{U}$, this implies the \textit{Equilibrium Dissipation Law} for time-averaged energy dissipation rate  $\langle  \varepsilon\rangle$ (Kolmogorov 1941), 

$$\langle  \varepsilon\rangle = \mathcal{O}(\frac{U^2}{\tau}) \simeq  C_{\varepsilon}\frac{U^3}{L},$$
with $C_{\varepsilon}=$ constant.  Saffman \cite{S68}, addressing the estimate of energy dissipation rates, $\langle  \varepsilon\rangle  \simeq  \frac{U^3}{L},$ and  wrote that,
 \vspace{10pt}

\textit{”This result is fundamental to an 
understanding of turbulence and yet still lacks theoretical support.”} 
\rightline{{\rm ---  P.G. Saffman 1968.}}

\vspace{10pt}

 Information about $C_{\varepsilon}$ carries interesting information about the structure of the turbulent flow. Therefore mathematically rigorous upper bounds on $C_{\varepsilon}$ are of high relevance. There are many analytical,  numerical and experimental evidence to support the equilibrium dissipation law; some of them are mentioned below.
 
\begin{figure}[b]
\includegraphics[scale=0.5]{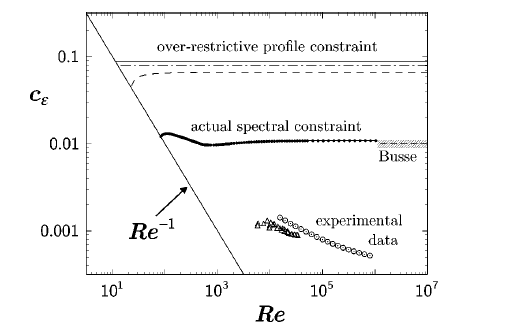}
\caption{$C_{\epsilon}$ versus $\Rey$, adopted from \cite{NGH99}.}
\label{Cepsilon}
\end{figure}

Taylor \cite{T35} considered $C_{\varepsilon}$ to be constant for geometrically similar boundaries. His work led to the question "If $C_{\varepsilon}$ depends on geometry, boundary and inlet conditions". Since then, a multitude of laboratory experiments \cite{S98} and numerical
simulations \cite{BSB07} concerned with isotropic homogeneous turbulence seem to confirm that $C_{\varepsilon}$ is independent of Reynolds number in the limit of high Reynolds number, but are not
conclusive as to whether $C_{\varepsilon}$ is universal at such high Reynolds number values. In fact, the high Reynolds number values
of $C_{\varepsilon}$ seem to differ from flow to flow. The first results in 3D obtained for turbulent shear flow  between parallel plates by Howard \cite{H72} and Busse \cite{B70}  under assumptions that the flow is statistically stationary. There is an asymptotic non-zero upper bound, $C_\epsilon \simeq 0.01$  as $\Rey\rightarrow \infty$, first derived by Busse \cite{B70} (Joining dashed line in Figure \ref{Cepsilon}). The lower bound on $C_{\varepsilon}$  is already given for laminar flow by Doering and Constantin, $ \frac{1}{\Rey} \leq C_{\varepsilon}$ (solid slanted straight line in Figure \ref{Cepsilon}).  Rigorous asymptotic $\Rey\rightarrow \infty$ dissipation rate bounds of the form $ \langle  \varepsilon\rangle \leq C_{\varepsilon}\frac{U^3}{L}$,   with  $C_\epsilon \simeq 0.088$ (topmost horizontal solid line in Figure \ref{Cepsilon}), were derived for a number of boundary-driven flows during the 1990s (Doering and  Constantin \cite{DC92}). The residual dissipation bound like this appeared for a shear layer turbulent Taylor-Couette flow-  where $L$ was the layer thickness and U was the overall velocity drop across the layer. The upper bound has been  confirmed with higher precision in \cite{NGH97} as $C_\epsilon \simeq 0.01087$ (Heavy dots in Figure \ref{Cepsilon}).
The corresponding experimental data measured by Reichardt \cite{R59}  for the plane Couette flow (Triangles in Figure \ref{Cepsilon}) and by Lathrop, Fineberg, and Swinney \cite{LFS92} for the small-gap Taylor-Couette system (Circles in Figure \ref{Cepsilon}).

\subsection{Why the  Eddy Viscosity model  and not the  Navier Stokes?}\

A shear flow is one where the boundary condition is tangential.  The flow problem  with the boundary condition like (\ref{BC})  becomes very close to the flow between rotating cylinders. Flow between rotating cylinders is one of the classic problems in experimental fluid dynamics (e.g. \cite{F95}).

Consider the Navier Stokes equations, $C_s=0$ in (\ref{SM}), with the boundary condition (\ref{BC}). The difficulty in the analysis of energy dissipation of  the shear flow appears due to the effect of the non-homogeneous boundary condition on the flow. The classical approach is required to construct a  careful and non-intuitive choice of background flow $\Phi$ known as the Hopf extension  \cite{H55}.

\begin{definition}  
\label{backgroundflow}
Let $\Phi(x,y,z) := (\phi(z),0,0)\tran$, where,
\begin{equation}
\phi(z) =
\left\{
	\begin{array}{ll}
		0  & \mbox{if } z\in [0,L-h]  \\
		\frac{U}{h} (z-L+h)  & \mbox{if }  z\in [L-h, L]
	\end{array}.
\right.
\end{equation}

\end{definition}

The strategy is to subtract off the inhomogeneous boundary conditions  (\ref{BC}).  Consider  $v= u-\Phi$, then $v$ satisfies homogeneous boundary conditions. Substituting $u=v+\Phi$ in the NSE yields, 
\begin{equation}\label{HomoEq}
  v_t+v \cdot \nabla v -\nu \Delta v + \nabla p+\phi(z) \frac{\partial v}{\partial x} + v_3 \phi'(z) (1,0,0)= \nu \phi''(z)(1,0,0),
   \end{equation}
   $$ \grad \cdot v =0.$$
The key idea in making progress in the mathematical understanding of the energy dissipation rate is the energy inequality. 
Taking the  inner product (\ref{HomoEq}) with $v=(v_1,v_2,v_3)$ and integrating over $\Omega$ gives,
\begin{equation}\label{L2-1}
\frac{1}{2} \frac{\partial}{\partial t} ||v||^2 + \nu ||\grad v||^2 + \int_{\Omega} \big(\phi(z) \frac{\partial v}{\partial x}\x v + v_1 v_3 \phi'(z)\big) \, dx =\int_{\Omega} \nu \phi''(z) v_1\, dx.
\end{equation}

However the NSE  are rewritten to make the viscous term easy to handle, the nonlinear term changes and must again be wrestled with. To calculate energy dissipation, the non-linear terms should be controlled by the diffusion term. Since both terms are quadratic in $u$, considering current tools in analysis, this control is doable only by assuming $h$ in Definition  \ref{backgroundflow} being small. Doering and Constantin \cite{DC92} used the NSE to find an upper bound on  the time-averaged energy dissipation rate  for shear driven turbulence, assuming $h \simeq \Rey^{-1}$.   Similar estimations have been proven by Marchiano \cite{M94}, Wang \cite{W97} and Kerswell \cite{K97} in more generality. For the semi-discrete NSE, John, Layton and Manica \cite{JLM07}  have shown that assuming infinite resolution near the boundaries, computed time-averaged energy dissipation rate $\langle\varepsilon(u^h) \rangle$ for the shear flow scales as predicted for the continuous flow by the Kolmogorov theory,
$$\langle\varepsilon(u^h) \rangle \,\simeq \,\frac{U^3}{L}.$$

 Now the  question arises:  How to find $<\varepsilon(u^h)>$ for shear driven turbulence without any restriction on the mesh size? With current analysis tools, the effort seems to be hopeless. Therefore  the p-laplacian term, 
\begin{equation}\label{Plaplacian}
\grad \x (|\grad u|^{p-2}\, \grad u),
\end{equation}
 was added to the NSE. This modification will lead to the  $p^{\mbox{th}}-$ degree term on the left side of the energy inequality. Then applying appropriate Young's inequality on the non-linear term, we will absorb it on the $p^{\mbox{th}}-$ degree term without any restriction on $h$.  Herein, we focus  on  $p=3$ (the Smagorinsky model (\ref{SM})), but the analysis for $p>3$ remains as an interesting problem.

\section{Mathematical preliminaries, notations and definitions }
We use the standard notations $L^p(\Omega), W^{k,p}(\Omega), H^k(\Omega)= W^{k,2}(\Omega)$ for the Lebesgue and Sobolev spaces respectively. The inner product in the space $L^2(\Omega)$ will be denoted by $(\cdot,\cdot)$ and its norm by $ || \x || $ for scalar, vector and tensor quantities. Norms in Sobolev spaces $H^k(\Omega), k>0$, are denoted by  $ || \x ||_{H^k}$ and the usual $L^p$ norm is denoted by $|| \x ||_{p}$. The symbols $C$ and $C_i$ for $i=1, 2, 3$ stand for generic positive constant independent of the $\nu$, $L$ and $U$.  In addition, $\grad u$ is the gradient tensor $(\grad u)_{ij} =\frac{\partial u_j}{\partial x_i}$ for $i, j= 1, 2 , 3$.\\

The Reynolds number is $\Rey=\frac{U L}{\nu}$. The time-averaged energy dissipation rate for model (\ref{SM})  includes dissipation due to the viscous forces and the turbulent diffusion. It is given by,

\begin{equation}
\langle  \varepsilon (u)\rangle =  \limsup\limits_{T\rightarrow\infty} \frac{1}{T} \int_{0}^{T}  (\frac{1}{|\Omega|} \int_ \Omega \nu |\nabla u|^2 + (c_s\delta)^2  |\nabla u|^3 dx) \,\, dt.
\end{equation}

\begin{definition}

The velocity at a given time $t$ is sought in the space

$\mathbb{X}(\Omega):= \{u \in H^1(\Omega):u(x,y,0)=(0,0,0)\tran, \, u(x,y,L)=(U,0,0)\tran, \, \mbox{$u$ is $L$-periodic in $x$ and $y$ direction} \}.$

The test function space is 

$\mathbb{X}_0(\Omega):= \{u \in H^1(\Omega):u(x,y,0)=(0,0,0)\tran,\, u(x,y,L)=(0,0,0)\tran,\, \mbox{$u$ is $L$-periodic in $x$ and $y$ direction} \}.$

The pressure at time $t$ is sought in 

${Q}(\Omega):= L_0^2(\Omega) = \{q \in L^2(\Omega): \hspace{3pt}\int_{\Omega} q dx= 0\}.$

And the space of divergence-free functions is denoted by

$V(\Omega):=\{ u \in \mathbb{X}(\Omega):  \hspace{3pt}(\grad \x u,q)=0  \hspace{5pt} \forall  q \in {Q}\}.$
\end{definition}

\begin{lem} \label{lemma1}
 $\Phi$ in Definition \ref{backgroundflow} satisfies, 
  \begin{tasks}(3)
\task ${\Vert\Phi\Vert}_{\infty} \leq U, $
\task ${\Vert\nabla\Phi\Vert}_{\infty} \leq\frac{U}{h},$
\task ${\Vert \Phi\Vert}^2  \leq\frac{U^2 L^2 h}{3},$  
\task $ \Vert \nabla \Phi\Vert^ 2 \leq\frac{U^2 L^2}{h},$
\task ${\Vert \nabla\Phi\Vert}_{3}^3  \leq\frac{U^3 L^2 }{h^2},$ 
\task $||\grad \Phi||_{\frac{3}{2}}^3= \frac{U^3 L^4}{h}.$
\end{tasks}
\end{lem}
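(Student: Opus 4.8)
The plan is to exploit the fact that $\Phi$ depends only on the single variable $z$ and is piecewise linear, so that every norm reduces to an elementary one-dimensional integral. Since $\Phi(x,y,z)=(\phi(z),0,0)\tran$ is independent of $x$ and $y$ and the domain $\Omega=(0,L)^3$ is a product, each integral over $\Omega$ factors as $L^2$ (the $x$- and $y$-integrations over $(0,L)^2$) times a single integral $\int_0^L(\cdot)\,dz$. The two pointwise identities that make everything explicit are $|\Phi|=|\phi(z)|$ and $|\grad\Phi|=|\phi'(z)|$: the only nonvanishing entry of the gradient tensor $(\grad\Phi)_{ij}=\frac{\partial\Phi_j}{\partial x_i}$ is $(\grad\Phi)_{31}=\phi'(z)$, whose (Frobenius) magnitude is $|\phi'(z)|$. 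From Definition \ref{backgroundflow} one reads off $\phi'(z)=U/h$ on $[L-h,L]$ and $\phi'(z)=0$ on $[0,L-h]$, together with $0\le\phi(z)\le U$ and $\phi(L)=U$.

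I would dispatch (a) and (b) immediately, as they are just suprema: $\|\Phi\|_\infty=\sup_z|\phi(z)|=U$ and $\|\grad\Phi\|_\infty=\sup_z|\phi'(z)|=U/h$. For the remaining four I would compute the active integral over $[L-h,L]$ directly, using the substitution $s=z-L+h$ (carrying $[L-h,L]$ to $[0,h]$) where a monomial in $\phi$ appears. Concretely:
\[
\|\Phi\|^2=L^2\!\int_{L-h}^{L}\!\Big(\tfrac{U}{h}\Big)^2(z-L+h)^2\,dz=L^2\,\tfrac{U^2}{h^2}\,\tfrac{h^3}{3}=\tfrac{U^2L^2h}{3},
\]
\[
\|\grad\Phi\|^2=L^2\!\int_{L-h}^{L}\!\Big(\tfrac{U}{h}\Big)^2 dz=\tfrac{U^2L^2}{h},\qquad
\|\grad\Phi\|_{3}^{3}=L^2\!\int_{L-h}^{L}\!\Big(\tfrac{U}{h}\Big)^3 dz=\tfrac{U^3L^2}{h^2}.
\]
For (f) the only bookkeeping point is the definition of the $L^{3/2}$ norm, $\|f\|_{3/2}=\big(\int|f|^{3/2}\big)^{2/3}$, so that the outer cube cancels the exponent $2/3$ and leaves a squared integral:
\[
\|\grad\Phi\|_{3/2}^{3}=\Big(L^2\!\int_{L-h}^{L}\!\big(\tfrac{U}{h}\big)^{3/2} dz\Big)^{2}=\big(L^2\,U^{3/2}h^{-1/2}\big)^2=\tfrac{U^3L^4}{h}.
\]

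There is no genuine obstacle here: every bound is in fact an exact equality, so the inequality signs in (a)–(e) are satisfied with room to spare and nothing is lost in estimation. The only places that warrant a moment's care are the conventions already flagged—interpreting $|\grad\Phi|$ as the magnitude of the single-entry gradient tensor, and tracking the exponent $2/3$ hidden in the $L^{3/2}$ norm in part (f)—after which each line is a one-line integral of a monomial.
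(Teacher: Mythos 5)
Your proof is correct and is exactly what the paper intends: the paper disposes of this lemma with the single line that all items are ``immediate consequences of the Definition,'' and your direct computations (factoring each integral as $L^2$ times a one-dimensional integral over $[L-h,L]$, with $|\grad\Phi|=|\phi'(z)|$) are precisely those consequences spelled out. All six evaluations, including the exponent bookkeeping in the $L^{3/2}$ case, check out.
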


\begin{proof}
They all are the immediate consequences of the Definition \ref{backgroundflow}.
\end{proof}
\begin{re}
 $\Phi$ extends the boundary conditions (1.2) to the interior of $\Omega$. Moreover, it is a divergence-free function. $h\in(0,L)$ stands for the spatial  mesh size.
\end{re}

Using a standard scaling argument in the next lemma shows how the constant $C_{\ell}$ in the Sobolev inequality $||u||_6 \leq C_{\ell}||\nabla u||_3$ depends on the geometry in $\mathbb{R}^3$.

\begin{lem}\label{lemma2}
Consider the Sobolev inequality $||u||_6 \leq C_{\ell} ||\nabla u||_3$, then $C_{\ell}$ is the order of $L^\frac{1}{2}$ when the domain is $\Omega=[0,L]^3$,  i.e.
$$||u||_6 \leq C \, L^{\frac{1}{2}} ||\nabla u||_3.$$
\end{lem}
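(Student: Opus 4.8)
The plan is to reduce the inequality on $\Omega=[0,L]^3$ to the very same inequality on the reference unit cube $\hat\Omega=[0,1]^3$, where the Sobolev constant is a pure, dimensionless number, and then to track how each of the two norms transforms under rescaling. First I would record the borderline embedding $W^{1,3}\hookrightarrow L^6$ on the fixed cube $\hat\Omega$: there exists a constant $C$ depending only on $\hat\Omega$ (hence independent of $L$, $U$, $\nu$) with $\|\hat u\|_{L^6(\hat\Omega)}\le C\,\|\grad\hat u\|_{L^3(\hat\Omega)}$ for admissible $\hat u$. The absence of a lower-order $\|\hat u\|_{3}$ term is legitimate precisely because the functions of interest carry homogeneous traces in the $z$-direction (the space $\mathbb{X}_0$), so a Poincar\'e inequality lets it be absorbed.

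Next I would introduce the dilation $x=L\hat x$ and set $\hat u(\hat x):=u(L\hat x)$, so that $\hat u$ lives on $\hat\Omega$ whenever $u$ lives on $\Omega$. A direct change of variables, using $dx=L^3\,d\hat x$, gives
$$
\|u\|_{6}^6=\int_\Omega |u|^6\,dx = L^3\int_{\hat\Omega}|\hat u|^6\,d\hat x = L^3\,\|\hat u\|_{L^6(\hat\Omega)}^6,
$$
so that $\|u\|_{6}=L^{1/2}\,\|\hat u\|_{L^6(\hat\Omega)}$. By the chain rule $\grad_x u = L^{-1}\,(\grad_{\hat x}\hat u)$, and therefore
$$
\|\grad u\|_{3}^3=\int_\Omega |\grad_x u|^3\,dx = L^{-3}\,L^3\int_{\hat\Omega}|\grad_{\hat x}\hat u|^3\,d\hat x = \|\grad\hat u\|_{L^3(\hat\Omega)}^3,
$$
i.e. the $L^3$ norm of the gradient is scale invariant in three dimensions: the factor $L^{-3}$ from the chain rule exactly cancels the Jacobian $L^3$. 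This is the borderline case $p=n=3$, for which $n/p-1=0$.

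Finally, inserting these two relations into the reference inequality yields $L^{-1/2}\|u\|_{6}\le C\,\|\grad u\|_{3}$, and multiplying through by $L^{1/2}$ gives $\|u\|_{6}\le C\,L^{1/2}\,\|\grad u\|_{3}$, which is the claim with $C_\ell=\mathcal{O}(L^{1/2})$. I do not anticipate a serious obstacle; the one point demanding care is the bookkeeping of the powers of $L$ — the Jacobian $dx=L^3\,d\hat x$ against $\grad_x=L^{-1}\grad_{\hat x}$ — together with the observation that the critical exponent $p=n=3$ is what renders $\|\grad u\|_{3}$ dimensionless, thereby forcing $C_\ell$ to scale exactly as $L^{1/2}$. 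The only modeling assumption to confirm is that the functional setting (homogeneous data in $z$, as in $\mathbb{X}_0$) truly removes the lower-order term on the reference cube.
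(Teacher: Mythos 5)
Your proof is correct and follows essentially the same route as the paper: the dilation $x=L\hat x$, the scale invariance of $\|\nabla u\|_{3}$ in three dimensions, and the factor $L^{1/2}$ picked up by the $L^6$ norm. If anything, your version is stated more cleanly than the paper's (you invoke the reference inequality on the unit cube and transport it, whereas the paper's chain of inequalities reads slightly circularly), and your remark that the zero-trace/Poincar\'e structure is what justifies omitting the lower-order term is a point the paper leaves implicit.
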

\begin{proof}
Let $\Omega=[0,L]^3$, $\hat{\Omega}=[0,1]^3$  and for simplicity $x=(x_1,x_2,x_3)$ and $\hat{x}=(\hat{x}_1,\hat{x}_2,\hat{x}_3)$. Consider the change of variable $\eta: \hat{\Omega} \longrightarrow \Omega$ by $\eta (\hat{x})= L\,\hat{x}= x.$ If $ {u}(L \hat{x}) = \hat{u}(\hat{x})$, using the chain rule gives, 

$$
\frac{d}{d\hat{x_i}} (\hat{u}(\hat{x}))=\frac{d}{d\hat{x_i}} (u(L\hat{x}))= L \frac{d u}{dx_i}(L\hat{x}),
$$
for  $i=1,2,3$ and since $L\hat{x}=x$ we have,
\begin{equation}\label{Sobolev1}
\frac{1}{L} \frac{d \hat{u}}{d \hat{x_i}}= \frac{d u}{d x_i}.
\end{equation}
Using (\ref{Sobolev1}) and the change of variable formula, one can show, 
$$
||\frac{\partial u_i}{\partial x_j}||_3^3= \int_\Omega |\frac{\partial u_i}{\partial x_j}|^3 \, dx = \int_{\hat{\Omega}} \frac{1}{L^3} |\frac{\partial  \hat{u}_i}{\partial \hat{x}_j}|^3\, L^3 d\hat{x}= ||\frac{\partial \hat{u}_i}{\partial \hat{x}_j}||_3^3,
$$
therefore, 
 
\begin{equation}\label{Sobolev2}
||\nabla u||_3^3= \sum_{i,j=1}^{3} ||\frac{\partial u_i}{\partial x_j}||_3^3= \sum_{i,j=1}^{3} ||\frac{\partial \hat{u}_i}{\partial \hat{x}_j}||_3^3= ||\hat{\nabla} \hat{u}||_3^3.
\end{equation}
Again by applying the change of variable formula, we have,  
$$ ||u||_{L^6(\Omega)} = (\int_\Omega |u|^6 dx)^{\frac{1}{6}} = (\int_{\hat{\Omega}} L^3\, |\hat{u}|^6 d\hat{x})^{\frac{1}{6}} = L^{\frac{1}{2}} ||\hat{u}||_{L^6(\hat{\Omega})},
$$
using the Sobolev inequality and also inequality (\ref{Sobolev2}) on the above equality leads to,
\begin{equation}
 ||\hat{u}||_{L^6(\hat{\Omega})}= L^{-\frac{1}{2}} ||u||_{L^6(\Omega)} \leq L^{-\frac{1}{2}} C_{\ell}||\nabla u||_3 = L^{-\frac{1}{2}} C_{\ell} ||\hat{\nabla} \hat{u}||_3,
\end{equation}
therefore as claimed  $C_{\ell} = \mathcal{O}(L^{\frac{1}{2}})$.
\end{proof}

We will need the well-known dependence of  Poincar\'e -Friedrichs inequality constant on the domain. A straightforward argument in the thin domain $\mathcal{O}_{h}$ implies Lemma \ref{lemma3}, which is a special case of Poincar\'e -Friedrichs inequality.

 \begin{lem} \label{lemma3}
 Let $\mathcal{O}_{h}=\lbrace(x,y,z)\in \Omega : L-h \leq z \leq L\rbrace  $ 
  be the region close to the upper boundary. Then we have
  \begin{equation}
   {\Vert u - \Phi\Vert}_{L^2(\mathcal{O}_{h})} \leq \,h \,  {\Vert\nabla (u -\Phi)\Vert }_{L^2(\mathcal{O}_{h})}.
    \end{equation}
  \end{lem}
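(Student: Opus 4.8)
The plan is to recognize this as a one-dimensional Poincar\'e--Friedrichs estimate in the wall-normal variable $z$, applied slabwise across the thin layer $\mathcal{O}_{h}$. Set $v := u - \Phi$. The crucial structural fact is that $v$ vanishes on the top face $z=L$: by Definition \ref{backgroundflow}, $\phi(L) = \frac{U}{h}(L-L+h) = U$, so $\Phi(x,y,L) = (U,0,0)\tran = u(x,y,L)$ and hence $v(x,y,L) = (0,0,0)\tran$. At the lower face $z = L-h$ the difference need not vanish, so only the top Dirichlet condition is exploited.

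With this in hand, I would treat each scalar component $v_i$ separately and integrate $\partial v_i/\partial z$ from $z$ up to $L$. Since $v_i(x,y,L)=0$, the fundamental theorem of calculus gives
$$
v_i(x,y,z) = -\int_z^L \frac{\partial v_i}{\partial z}(x,y,s)\, ds \qquad \text{for } z\in[L-h,L].
$$
Applying the Cauchy--Schwarz inequality to this representation, and using $L-z \le h$ together with $[z,L]\subseteq[L-h,L]$, yields the pointwise bound
$$
|v_i(x,y,z)|^2 \le (L-z)\int_z^L \Big|\frac{\partial v_i}{\partial z}(x,y,s)\Big|^2 ds \le h\int_{L-h}^L \Big|\frac{\partial v_i}{\partial z}(x,y,s)\Big|^2 ds.
$$

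The remaining step is careful bookkeeping of the two factors of $h$. I would integrate the last inequality over $\mathcal{O}_{h}$; because the right-hand side no longer depends on $z$, the integration in $z$ across $[L-h,L]$ contributes a \emph{second} factor $h$, giving $\|v_i\|_{L^2(\mathcal{O}_{h})}^2 \le h^2 \,\|\partial v_i/\partial z\|_{L^2(\mathcal{O}_{h})}^2$. Summing over $i=1,2,3$ and bounding $\sum_i |\partial v_i/\partial z|^2 \le |\nabla v|^2$ produces $\|v\|_{L^2(\mathcal{O}_{h})}^2 \le h^2 \,\|\nabla v\|_{L^2(\mathcal{O}_{h})}^2$, and taking square roots gives the claim. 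There is no genuine obstacle here: the only points requiring care are confirming the top-boundary condition for $v$ and tracking that the slab thickness $h$ enters twice---once from the length of the integration path and once from integrating the $z$-independent bound across the slab---which is precisely what yields the linear constant $h$ (rather than $h^2$) in the final $L^2$ estimate.
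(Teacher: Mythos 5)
Your proof is correct and is precisely the ``standard'' thin-domain Poincar\'e--Friedrichs argument the paper invokes without writing out: the paper's proof is just the one-line remark that the result is standard, and your slabwise FTC--plus--Cauchy--Schwarz computation (using $v=u-\Phi$ vanishing on $z=L$ and picking up one factor of $h$ from the integration path and one from integrating across the slab) is the expected filling-in of that detail. No discrepancy with the paper's approach.
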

  \begin{proof}
  The proof is standard.
  \end{proof}

\subsection{Variational Formulation and Discretization}
The variational formulation is obtained by taking the scalar product $ v\in\mathbb{X}_0$ and $q \in L_0^2 $ with (\ref{SM}) and integrating over the space $\Omega$. 

\begin{equation}
  \label{Variational} 
  \begin{aligned}
 (u_t, v) + \nu (\nabla u,\nabla v) + (u \x \grad u,v) - (p,\nabla \x v)+((C_s \delta)^2 |\grad u| \grad u,\grad v)&= 0 & \forall v \in \mathbb{X}_0  ,
    \\
  (\nabla \x u,q) & =0 & \forall q\in L_0^2  ,
   \\
    (u(x,0) - u_0(x) , v)
    &=
    0 &\forall v \in \mathbb{X}_0.  
   \end{aligned}
  \end{equation}
  To discretize the SM, consider two finite-dimensional spaces $\mathbb{X}^h \subset \mathbb{X}$  containing  linears and $\mathbb{Q}^h \subset \mathbb{Q}$ satisfying the following  discrete inf-sup condition where $\beta ^h >0$ uniformly in $h$ as $h \rightarrow 0$,
\begin{equation}\label{LBBH}
\inf_{q^h \in \mathbb{Q}^h} \sup_{v^h \in X^h} \frac{(q^h, \grad \x v^h)}{||\grad v^h|| ||q^h||} \geq \beta ^h >0.
\end{equation}
The inf-sup condition (\ref{LBBH}) plays a significant role in studies of the finite-element approximation of the Navier-Stokes equations. It is usually taken as a criterion of whether or not the families of finite-element spaces yield stable approximations. In the  words, it ensures given the unique velocity, there is a corresponding pressure. It is also critical to bounding the fluid pressure and showing the pressure is stable.

Consider a subspace $V^h \subset \mathbb{X}^h$  defined by:
$$V^h:= \{ v^h \in X^h: (q^h, \grad \x v^h)=0, \forall q^h \in \mathbb{Q}^h \}.$$
Note that most often $ V^h\not\subset V$ and $\grad \x u^h \neq 0$ for any $u^h \in V^h.$  Thus we need an extension of the trilinear from $(u\x \grad v,w)$ as follows. 
\begin{definition} \label{deftrilinear}
\textbf{(Trilinear from)} Define the trilinear form  $b$ on $\mathbb{X}\times \mathbb{X} \times \mathbb{X}$ as,  
$$b(u,v,w):= \frac{1}{2}(u\x \grad v,w)- \frac{1}{2}(u\x \grad w,v).$$
\end{definition}

\begin{lem} \label{trilinear}
The nonlinear term  $b(\x,\x,\x)$ is continuous on $\mathbb{X}\times \mathbb{X} \times \mathbb{X}$ (and thus on $V \times V \times V$ as well). Moreover,  we have the following skew-symmetry properties for $b(\x,\x,\x),$ 
$$b(u,v,w)= (u\x \grad v,w) \hspace{10pt} \forall u\in V \mbox{ and } v ,w\in \mathbb{X},$$
Moreover,
$$b(u,v,v)=0 \hspace{10pt} \forall u ,v\in  \mathbb{X}.$$
\end{lem}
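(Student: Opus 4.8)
The plan is to treat the three assertions separately, starting from the algebraically immediate identity and building toward the estimate that requires genuine analysis.

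First I would dispose of $b(u,v,v)=0$. Setting $w=v$ directly in Definition \ref{deftrilinear} gives $b(u,v,v)=\frac{1}{2}(u\x\grad v,v)-\frac{1}{2}(u\x\grad v,v)=0$, so this needs nothing beyond the definition and holds for all $u,v\in\mathbb{X}$. Next I would establish $b(u,v,w)=(u\x\grad v,w)$ for $u\in V$. By the definition of $b$, this is equivalent to the skew-symmetry relation $(u\x\grad v,w)+(u\x\grad w,v)=0$. I would prove it by integrating by parts componentwise: writing $(u\x\grad v,w)=\int_\Omega u_j\,\frac{\partial v_i}{\partial x_j}\,w_i\,dx$ and moving the derivative off $v_i$ produces the divergence term $-\int_\Omega v_i w_i\,(\grad\x u)\,dx$, the mirror term $-\int_\Omega u_j v_i\,\frac{\partial w_i}{\partial x_j}\,dx=-(u\x\grad w,v)$, and a boundary contribution $\int_{\partial\Omega}(u\x n)(v\x w)\,dS$. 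The interior divergence term vanishes because $u\in V$ is divergence-free, so the whole argument reduces to killing the boundary integral.

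The boundary term is where the geometry of (\ref{BC}) enters, and it is the one place care is needed. On the two faces normal to $x$ and the two faces normal to $y$ the integrands agree while the outward normals are opposite, so $L$-periodicity makes these contributions cancel in pairs. On the faces $z=0$ and $z=L$ the outward normal is $\mp(0,0,1)$, so $u\x n=\mp u_3$; but every $u\in V\subset\mathbb{X}$ satisfies $u=(0,0,0)\tran$ on $z=0$ and $u=(U,0,0)\tran$ on $z=L$, whence $u_3=0$ on both walls and the integrand vanishes pointwise there. Thus the boundary integral is zero, the skew-symmetry holds, and $b(u,v,w)=\frac{1}{2}(u\x\grad v,w)+\frac{1}{2}(u\x\grad v,w)=(u\x\grad v,w)$.

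Finally I would prove continuity on $\mathbb{X}\times\mathbb{X}\times\mathbb{X}$; continuity on $V\times V\times V$ follows at once since $V\subset\mathbb{X}$. I would bound each of the two terms defining $b$ by H\"older's inequality with exponents $(4,2,4)$, namely $|(u\x\grad v,w)|\leq ||u||_4\,||\grad v||\,||w||_4$, and then invoke the three-dimensional Sobolev embedding $H^1(\Omega)\hookrightarrow L^4(\Omega)$ to replace the $L^4$ norms by $H^1$ norms, yielding $|b(u,v,w)|\leq C\,||u||_{H^1}\,||v||_{H^1}\,||w||_{H^1}$. The main obstacle is the boundary integral in the second step: the interplay of inhomogeneous Dirichlet data on the horizontal walls with periodicity in the horizontal directions is precisely what forces $\int_{\partial\Omega}(u\x n)(v\x w)\,dS$ to vanish, and the delicate point is recognizing that it is $u_3$, not $v\x w$, that must vanish on the walls, so that the nonzero value $u=(U,0,0)\tran$ at $z=L$ causes no difficulty.
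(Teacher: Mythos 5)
Your proof is correct and is exactly the standard argument that the paper itself invokes by citing p.~114 of Girault--Raviart: $b(u,v,v)=0$ by definitional cancellation, the identity $b(u,v,w)=(u\cdot\nabla v,w)$ for $u\in V$ via componentwise integration by parts in which the divergence constraint kills the interior term while periodicity on the lateral faces and $u\cdot n=u_3=0$ on the walls kill the boundary term, and continuity via H\"older with exponents $(4,2,4)$ plus the embedding $H^1(\Omega)\hookrightarrow L^4(\Omega)$. Your observation that it is $u_3$, rather than $v\cdot w$ (which equals $U^2$ on $z=L$), that annihilates the wall contribution is precisely the right point of care, so nothing further is needed.
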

\begin{proof}
The proof is standard, see p.114 of Girault and Raviart \cite{R79}.
\end{proof}

The semi-discrete/continuous-in-time finite element approximation continues by selecting finite element spaces $\mathbb{X}_0^h \subset \mathbb{X}_0$. The approximate velocity and pressure of the Smagorinsky problem (\ref{SM}) are $u^h: [0,T] \longrightarrow \mathbb{X}^h$ and $p^h : (0,T] \longrightarrow \mathbb{Q}^h$ such that,

\begin{equation}
  \label{FEM-SM} 
  \begin{aligned}
 (u^h_t, v^h) + \nu (\nabla u^h,\nabla v^h) + b(u^h,u^h,v^h) - (p^h,\nabla \x v^h)+((C_s \delta)^2 |\grad u^h| \grad u^h,\grad v^h)&= 0 & \forall v^h \in \mathbb{X}^h_0  ,
    \\
  (\nabla \x u^h,q^h) & =0 & \forall q^h\in \mathbb{Q}^h  ,
   \\
    (u^h(x,0) - u_0(x) , v^h)
    &=
    0 &\forall v^h \in \mathbb{X}^h_0.  
   \end{aligned}
  \end{equation}

\section{Theorems and Proofs} 

In the following theorems, we present upper bounds on the computed time-averaged energy dissipation rate for the Smagorinsky model (\ref{SM}) subject to the shear flow boundary condition (\ref{BC}). Theorem \ref{thm1} considers the case  when the mesh size is fine enough $h < \mathcal{O}(\Rey^{-1}) \, L$.  The restriction on the  mesh size arises from the mathematical analysis of constructible  background flow in finite element space.

 On the other hand, Theorem \ref{thm2} investigates $\langle  \varepsilon (u^h)\rangle$ for any mesh size $0<h<L$. We then take a minimum of two bounds to find the optimal upper bound in (\ref{summery}) with respect to the current analysis.

\begin{thm}\label{thm1}
\textbf{(fully-resolved mesh)} Suppose $u_0 \in L^2(\Omega)$ and mesh size $h <(\frac{1}{5} \Rey^{-1}) L$. Then $\langle  \varepsilon (u^h)\rangle $ satisfies,
$$\langle  \varepsilon (u^h)\rangle \leq  C\, \big[1+  (\frac{c_s \delta}{L})^2\, \Rey ^2\big]\, \frac{U^3}{L}.$$
\end{thm}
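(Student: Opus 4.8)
The plan is to adapt the Doering--Constantin background-flow method to the semi-discrete finite element system (\ref{FEM-SM}). First I would write $u^h=v^h+\Phi$, where $\Phi$ is the Hopf extension of Definition \ref{backgroundflow} carried on a strip near the moving wall $z=L$ whose width $\ell$ is taken comparable to the boundary-layer scale $\Rey^{-1}L$ rather than to $h$; the hypothesis $h<(\tfrac15\Rey^{-1})L$ is what guarantees that this strip is a union of mesh layers, so that the piecewise-linear, divergence-free $\Phi$ lies in $\mathbb{X}^h$ and $v^h=u^h-\Phi\in\mathbb{X}^h_0$ is an admissible test function (both $u^h$ and $\Phi$ meeting (\ref{BC})). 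Testing (\ref{FEM-SM}) with $v^h$, the pressure term vanishes since $\grad\x\Phi=0$ and $(\grad\x u^h,p^h)=0$, and $(u^h_t,v^h)=\tfrac12\frac{d}{dt}\Vert v^h\Vert^2$ because $\Phi$ is stationary. Using the skew-symmetry of Lemma \ref{trilinear}, namely $b(u^h,v^h,v^h)=0$, together with $b(\Phi,\Phi,v^h)=(\Phi\x\grad\Phi,v^h)=0$ (as $\Phi$ depends only on $z$), the convective term collapses to $b(u^h,u^h,v^h)=b(v^h,\Phi,v^h)$, and I obtain the energy identity
\begin{multline*}
\tfrac12\frac{d}{dt}\Vert v^h\Vert^2+\nu\Vert\grad v^h\Vert^2+(C_s\delta)^2\Vert\grad u^h\Vert_3^3\\
=-\nu(\grad\Phi,\grad v^h)-b(v^h,\Phi,v^h)+(C_s\delta)^2(|\grad u^h|\grad u^h,\grad\Phi).
\end{multline*}

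Next I would estimate the three terms on the right. The linear term yields $|\nu(\grad\Phi,\grad v^h)|\le\tfrac{\nu}{4}\Vert\grad v^h\Vert^2+\nu\Vert\grad\Phi\Vert^2$ by Cauchy--Schwarz and Young. The Smagorinsky cross-term is handled by H\"older with exponents $(\tfrac32,3)$ and the identity $\Vert\,|\grad u^h|\grad u^h\Vert_{3/2}=\Vert\grad u^h\Vert_3^2$, then Young's inequality, giving $(C_s\delta)^2(|\grad u^h|\grad u^h,\grad\Phi)\le\tfrac12(C_s\delta)^2\Vert\grad u^h\Vert_3^3+C(C_s\delta)^2\Vert\grad\Phi\Vert_3^3$, whose first piece is absorbed into the cubic dissipation on the left. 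I expect the nonlinear term $b(v^h,\Phi,v^h)$ to be the main obstacle: since $V^h\not\subset V$, $v^h$ is only discretely divergence free, so I cannot reduce $b$ to a single convective integral and must retain the skew form $\tfrac12(v^h\x\grad\Phi,v^h)-\tfrac12(v^h\x\grad v^h,\Phi)$ and bound both halves. The decisive structural fact is that $\grad\Phi$ and $\Phi$ are supported only in the thin strip; using $\Vert\grad\Phi\Vert_\infty\le U/\ell$ and $\Vert\Phi\Vert_\infty\le U$ from Lemma \ref{lemma1}, Cauchy--Schwarz, and the Poincar\'e--Friedrichs estimate of Lemma \ref{lemma3} on that strip (which contributes one factor of $\ell$), both halves are controlled by a constant times $U\ell\,\Vert\grad v^h\Vert^2$.

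The smallness hypothesis then enters decisively: with $\ell\simeq\Rey^{-1}L$ one has $U\ell\simeq\nu$, and the threshold $\tfrac15$ is the margin ensuring $\tfrac{\nu}{4}\Vert\grad v^h\Vert^2+CU\ell\Vert\grad v^h\Vert^2<\nu\Vert\grad v^h\Vert^2$, so the linear and nonlinear terms are absorbed into the viscous dissipation, leaving
\begin{equation*}
\tfrac12\frac{d}{dt}\Vert v^h\Vert^2+c\,\nu\Vert\grad v^h\Vert^2+\tfrac12(C_s\delta)^2\Vert\grad u^h\Vert_3^3\le\nu\Vert\grad\Phi\Vert^2+C(C_s\delta)^2\Vert\grad\Phi\Vert_3^3.
\end{equation*}
I would then restore the full dissipation via $\nu\Vert\grad u^h\Vert^2\le2\nu\Vert\grad v^h\Vert^2+2\nu\Vert\grad\Phi\Vert^2$, pass to the time average, and discard $\langle\frac{d}{dt}\Vert v^h\Vert^2\rangle=0$ after an a priori bound keeping $\Vert v^h(t)\Vert$ bounded (from integrating the same inequality). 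Dividing by $|\Omega|=L^3$ and inserting the Lemma \ref{lemma1} bounds $\Vert\grad\Phi\Vert^2\le U^2L^2/\ell$ and $\Vert\grad\Phi\Vert_3^3\le U^3L^2/\ell^2$ with $\ell\simeq\Rey^{-1}L$ gives $\nu U^2/(L\ell)\simeq U^3/L$ and $(C_s\delta)^2U^3/(L\ell^2)\simeq(\tfrac{C_s\delta}{L})^2\Rey^2\,U^3/L$, which is exactly the asserted bound. The step I would watch most carefully is the constant bookkeeping in the absorption, since that is precisely where the explicit $\tfrac15$ threshold on $h$ is produced.
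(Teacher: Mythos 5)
Your proposal is correct and follows essentially the same route as the paper's proof: the Doering--Constantin/Hopf background-flow argument applied to the semi-discrete system, with the strip-supported $\Phi$, the skew-symmetrized trilinear form, the Poincar\'e--Friedrichs bound of Lemma \ref{lemma3} on the strip, Young-inequality absorption into both the viscous and Smagorinsky dissipation terms, and strip width $\simeq \Rey^{-1}L$ producing exactly the two terms $U^3/L$ and $(\frac{C_s\delta}{L})^2\Rey^2\,U^3/L$ of the stated bound. The differences are only cosmetic: you collapse the nonlinearity to the single term $b(v^h,\Phi,v^h)$ where the paper expands $b(u^h,u^h,\Phi)$ into four convective pieces and estimates each (two of which are in fact identically zero), and you handle the time derivative through $\frac{d}{dt}\|v^h\|^2$ rather than cancelling $\frac12\|u^h(T)\|^2$ against itself via Young's inequality as the paper does --- neither change affects the substance or the final constants.
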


\begin{proof}

  Let $h=\gamma L$ in Definition \ref{backgroundflow}. We shall select  $0<\gamma<1$ appropriately so that $\Phi$ belongs to the finite element space.  Take $v^h = u^h- \Phi \in \mathbb{X}^h_0$ in (\ref{FEM-SM}). Since $b(\x,\x,\x)$ is skew-symmetric (Lemma \ref{trilinear}) and $\grad \x \Phi =0$, we have, 

$$(u^h_t,u^h-\Phi)+\nu(\nabla u^h,\nabla u^h-\nabla\Phi)+b(u^h,u^h,u^h-\Phi)+((C_s \delta)^2|\nabla u^h| \nabla u^h, \nabla u^h-\nabla\Phi)=0,$$
and integrate in time to get,
\begin{equation} \label{T1-0}
\begin{split}
\frac{1}{2} \Vert u^h(T)\Vert^2 - \frac{1}{2} \Vert u^h(0)\Vert^2 &+\nu \int _{0}^{T} \Vert \nabla u^h\Vert^2 dt +\int_{0}^{T}(\int_\Omega (C_s \delta)^2  |\nabla u^h|^3 dx) dt = (u^h(T),\Phi)-(u^h(0),\Phi) \\
 & +\int _{0}^{T}b(u^h,u^h,\Phi) dt+\nu \int _{0}^{T} (\nabla u^h,\nabla\Phi) dt
 + (C_s \delta)^2 \int_{0}^{T}(|\nabla u^h| \nabla u^h, \nabla \Phi) dt.\\
\end{split}
\end{equation}

 Using  Cauchy-Schwarz-Young's inequality and Lemma \ref{lemma1} for $h=\gamma L$ to bound the right-hand side of the above energy equality.

\begin{equation}\label{T1-1}
(u^h(T),\Phi)\leq \frac{1}{2}\Vert u^h(T)\Vert^2 +\frac{1}{2}\Vert\Phi\Vert^2 = \frac{1}{2}\Vert u^h(T)\Vert^2 +\frac{U ^2 \gamma L^3}{6}.
\end{equation}
\begin{equation}\label{T1-2}
(u^h(0),\Phi)\leq \Vert u^h(0)\Vert \Vert\Phi\Vert = \sqrt{\frac{\gamma}{3}} U L^{\frac{3}{2}} \Vert u^h(0)\Vert.
\end{equation}
\begin{equation}\label{T1-3}
\nu \int _{0}^{T} (\nabla u^h,\nabla\Phi) dt \leq \frac{\nu}{2} \int _{0}^{T}\Vert \nabla u^h\Vert^2 + \Vert \nabla\Phi\Vert ^2  dt = \frac{\nu}{2} \int _{0}^{T}\Vert \nabla u^h\Vert^2 dt + \frac{\nu}{2}  \frac{U^2 L}{\gamma}T.
\end{equation}

 Next, the nonlinear term $b(\x,\x,\x)$  in (\ref{T1-0}) can be rewritten as, 
\begin{equation} \label{NLeq}
\begin{split}
b(u^h,u^h,\Phi) & = b(u^h-\Phi,u^h-\Phi,\Phi)+ b(\Phi,u^h-\Phi,\Phi)  \\
 & = \frac{1}{2}  b(u^h-\Phi,u^h-\Phi,\Phi) - \frac{1}{2} b(u^h-\Phi,\Phi,u^h-\Phi)\\
 & +\frac{1}{2}  b(\Phi,u^h-\Phi,\Phi) - \frac{1}{2}  b(\Phi,\Phi,u^h-\Phi).
\end{split}
\end{equation}
Each term in (\ref{NLeq}) is estimated separately as follows using Lemma \ref{lemma3} and  Cauchy-Schwarz-Young's inequality. We will also take the advantage  of the fact that  $b(u^h,u^h,\Phi)$ is an integration on $\mathcal{O}_{\gamma L}$ since  supp$(\Phi) =\overline{\mathcal{O}} _{\gamma L}$. For the first term in (\ref{NLeq}) we have, 
\begin{equation} \label{NLeq1}
\begin{aligned}
& b(u^h-\Phi,u^h-\Phi,\Phi) \leq \Vert\Phi\Vert_{L^\infty} \Vert u^h-\Phi\Vert  \Vert \nabla(u^h-\Phi)\Vert \leq\gamma LU \Vert \nabla (u^h-\Phi)\Vert^2_{L^2}\\
  & \leq\gamma LU \Vert \nabla u^h- \nabla\Phi\Vert^2 \leq UL\gamma (\Vert\nabla u^h\Vert+ \Vert\nabla\Phi\Vert)^2  \leq UL\gamma (2 \Vert \nabla u ^h\Vert^2 + 2 \Vert \nabla \Phi \Vert^2) \\
 &\leq UL\gamma (2 \Vert \nabla u^h \Vert^2+2 \frac{U^2 L}{\gamma})= 2 UL\gamma \Vert \nabla u^h\Vert^2 +2 U^3L^2.\\
 \end{aligned}
\end{equation}
For the second term we have,
\begin{equation} \label{NLeq2}
\begin{aligned}
&b(u^h-\Phi,\Phi,u^h-\Phi)  \leq \Vert \nabla\Phi\Vert_{L_\infty} \Vert u^h-\Phi\Vert ^2 \leq \frac{U}{\gamma L} \gamma ^2 L^2 \Vert \nabla(u^h-\Phi)\Vert^2\\
& \leq \gamma ^2 L^2 \frac{U}{\gamma L}(2 \Vert \nabla u^h \Vert^2+2 \frac{U^2 L}{\gamma})=2 \gamma LU \Vert \nabla u^h\Vert^2+ 2U^3 L^2.
\end{aligned}
\end{equation} 
The third one is estimated as,
\begin{equation} \label{NLeq3}
\begin{split}
&b(\Phi,u^h-\Phi,\Phi)  \leq \Vert \Phi\Vert_{L^\infty} \Vert \nabla (u^h-\Phi)\Vert \Vert \Phi\Vert \leq U \sqrt{\frac{U^2\gamma L^3}{3}}(\Vert \nabla u^h\Vert+ \Vert \nabla \Phi\Vert)\\
& \leq U \sqrt{\frac{U^2\gamma L^3}{3}}(\Vert \nabla u^h\Vert+ \sqrt{\frac{U^2 L}{\gamma}})  \leq\frac{U^2\gamma ^ {\frac{1}{2}} L^{\frac{3}{2}}}{\sqrt{3}} \Vert \nabla u^h\Vert + \frac{U^3 L^2}{\sqrt{3}}\\
 & =[ \frac{U^{\frac{3}{2}} L}{\sqrt{3}}]\,\,\, [(U\gamma L)^{\frac{1}{2}}\Vert \nabla u^h\Vert] + \frac{U^3 L^2}{\sqrt{3}} \leq (\frac{U^3 L^2}{6}) + \frac{1}{2} UL\gamma \Vert \nabla u^h\Vert ^2+ \frac{U^3 L^2}{\sqrt{3}} \\
 &= \frac{1}{2} UL\gamma \Vert \nabla u^h\Vert^2 + (\frac{\sqrt{3}}{3}+\frac{1}{6})   U^3 L^2. 
 \end{split}
\end{equation}
And finally the last one satisfies, 
\begin{equation} \label{NLeq4}
\begin{split}
&b(\Phi,\Phi,u^h-\Phi) \leq \Vert\Phi\Vert_{L^\infty} \Vert \nabla\Phi\Vert \Vert u^h-\Phi\Vert  \leq U\sqrt{\frac{U^2 L}{\gamma}} \gamma L \Vert \nabla (u^h-\Phi)\Vert\\
&\leq U^2 \gamma^{\frac{1}{2}} L^{\frac{3}{2}} (\Vert\nabla u^h\Vert +\Vert\nabla\Phi\Vert)\leq U^2 \gamma^{\frac{1}{2}} L^{\frac{3}{2}} (\Vert\nabla u^h\Vert +(\frac{U^2 L}{\gamma})^{\frac{1}{2}})\\
&= U^2 \gamma^{\frac{1}{2}} L^{\frac{3}{2}} \Vert\nabla u^h\Vert + U^3 L^2 =[U^{\frac{3}{2}}L]\,\,[(UL\gamma)^{\frac{1}{2}} \Vert \nabla u^h\Vert]+ U^3 L^2\\
&\leq\frac{1}{2}(U^3 L^2)+ \frac{1}{2} UL\gamma \Vert \nabla u^h\Vert^2 + U^3 L^2 =\frac{1}{2} UL\gamma \Vert \nabla u^h\Vert^2+\frac{3}{2}U^3 L^2.\\
\end{split}
\end{equation}
Use (\ref{NLeq1}), (\ref{NLeq2}), (\ref{NLeq3}) and (\ref{NLeq4}) in (\ref{NLeq}) gives the final estimation for the non-linear term as below.

\begin{equation}\label{T1-4}
 | b(u^h,u^h,\Phi)|\leq \frac{5}{2}UL\gamma \Vert \nabla u^h\Vert^2 +\frac{19}{6} U^3 L^2.
\end{equation}

 Finally,  using  Hölder's  and Young's inequality for $p=\frac{3}{2}$ and $q=3$ and  lemma \ref{lemma2}  on the last term  gives,


\begin{equation}\label{T1-5}
\begin{split}
|(|\nabla u^h| \nabla u^h, \nabla \Phi)| & \leq \int_\Omega |\nabla u^h|^2  \cdot \nabla \Phi \,dx\\
 & \leq (\int_\Omega |\nabla u^h|^3)^\frac{2}{3} (\int_\Omega |\nabla \Phi|^3)^\frac{1}{3}\\
 & \leq \frac{2}{3}(\int_\Omega |\nabla u^h|^3)+ \frac{1}{3} (\int_\Omega |\nabla \Phi|^3 )\leq  \frac{2}{3}(\int_\Omega |\nabla u^h|^3)+ \frac{1}{3} \frac{U^3}{\gamma ^2}.
\end{split} 
\end{equation}

Inserting (\ref{T1-1}), (\ref{T1-2}), (\ref{T1-3}), (\ref{T1-4}) and (\ref{T1-5}) in (\ref{T1-0}) implies, 
\begin{equation}\label{T1-6}
\begin{split}
(\frac{\nu}{2}- \frac{5}{2} \gamma LU) &\int_{0}^{T} \Vert \nabla u^h\Vert ^2 dt +\frac{1}{3}  \int _{0}^{T} (\int _\Omega (C_s \delta)^2 |\nabla u^h|^3 dx)dt \leq  \frac{1}{2} \Vert u^h(0)\Vert ^2+ \frac{1}{6} U^2\gamma L^3   \\
 &   + \sqrt{\frac{\gamma}{3}} U L^{\frac{3}{2}} \Vert u^h(0)\Vert  +\frac{19}{6}U^3 L^2 T + \frac{\nu}{2 \gamma} L U^2 T + \frac{1}{3} (C_s \delta)^2 \frac{U^3 T}{\gamma ^2}.
\end{split}
\end{equation}
Dividing  (\ref{T1-6}) by $T$ and $ |\Omega| = L^3$ and  taking limsup as $T \rightarrow \infty$ leads to,
 
 $$ min \lbrace \frac{1}{2} - \frac{5}{2} \frac{\gamma L U}{\nu}, \frac{1}{3}\rbrace \langle  \varepsilon(u^h)\rangle  \leq \frac{19}{6} \frac{U^3}{L} +\frac{\nu}{2\gamma} \frac{U^2}{L^2} + \frac{1}{3}(C_s \delta)^2 \frac{U^3}{\gamma ^2 L^3}.$$
Let $\gamma< \frac{1}{5} \Rey^{-1}$, then $min \lbrace \frac{1}{2} - \frac{5}{2} \frac{\gamma L U}{\nu}, \frac{1}{3}\rbrace > 0$ and the above estimate becomes,
$$\langle\varepsilon(u^h)\rangle  \leq C \,\big[1 +  (\frac{C_s \delta}{L})^2  \Rey ^2\big]\, \frac{U^3}{L}, $$
which proves the theorem.
\end{proof}

\begin{re}
The kinetic energy, $\frac{1}{2}\Vert u^h\Vert^2$, is not required in the proof to be uniformly bounded in time since it was canceled out from both sides after inserting (\ref{T1-1}) in (\ref{T1-0}). 
\end{re}

The estimate in Theorem \ref{thm1} goes to $\frac{U^3}{L}$ for fixed $\Rey$ as $C_s\,\delta\rightarrow 0$, which is consistent with the rate proven for NSE by Doering and Constantin \cite{DC92}. But it over dissipates  for fixed  $\delta$ as $\Rey\rightarrow \infty$ as derived for the continuous case by Layton  \cite{L02}. To fix this issue, one can suggest a super fine filter size $\delta \simeq \frac{1}{\Rey}$  which is not practical due to the computation cost. From here, we are motivated  to study the following  under-resolved case.

\begin{thm}\label{thm2}
\textbf{(under-resolved mesh)}  Suppose $u_0 \in L^2(\Omega)$. Then for any given mesh size $ 0< h<L$, $\langle  \varepsilon (u^h)\rangle$  satisfies, 
$$
\langle  \varepsilon (u^h)\rangle \leq C\, \bigg[  \frac{1}{\Rey} \,\frac{L}{h}+   (\frac{C_s\, \delta}{h})^2+ \frac{L^5}{(C_s \delta)^4\,h}+\frac{L^{\frac{5}{2}}}{(C_s\, \delta)^{4}}\, {h^{\frac{3}{2}}}   \bigg] \, \frac{U^3}{L}.
$$
\end{thm}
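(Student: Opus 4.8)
The plan is to run exactly the background-flow and energy-inequality machine already used in Theorem \ref{thm1}, but with one essential change: the nonlinearity must be absorbed into the \emph{cubic} Smagorinsky dissipation rather than into the viscous term, which is what removes the restriction on $h$. First I would set $h$ in Definition \ref{backgroundflow} to the actual (arbitrary) mesh size, take $v^h=u^h-\Phi\in\mathbb{X}^h_0$ in (\ref{FEM-SM}), use the skew-symmetry of $b$ (Lemma \ref{trilinear}) together with $\grad\cdot\Phi=0$, and integrate in time to reach the same energy equality (\ref{T1-0}). The kinetic-energy and initial-data terms $(u^h(T),\Phi)$ and $(u^h(0),\Phi)$ are treated exactly as in (\ref{T1-1})--(\ref{T1-2}): the first cancels $\tfrac12\|u^h(T)\|^2$ against the left side, and both are bounded uniformly in $T$, so they vanish after dividing by $T$ and letting $T\to\infty$.

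Next I would dispose of the two ``linear-in-$\Phi$'' dissipation terms, which produce the first two terms of the claimed bound. For the viscous term, Young's inequality $\nu(\grad u^h,\grad\Phi)\le\tfrac{\nu}{2}\|\grad u^h\|^2+\tfrac{\nu}{2}\|\grad\Phi\|^2$ keeps $\tfrac{\nu}{2}\|\grad u^h\|^2$ on the left, and by Lemma \ref{lemma1}(d) the residual $\tfrac{\nu}{2}\|\grad\Phi\|^2\le\tfrac{\nu}{2}U^2L^2/h$ yields, after division by $|\Omega|=L^3$, the $\Rey^{-1}(L/h)$ contribution. For the Smagorinsky boundary term I argue exactly as in (\ref{T1-5}): Hölder with exponents $\tfrac32,3$ followed by Young absorb $\tfrac23(C_s\delta)^2\|\grad u^h\|_3^3$ onto the left, while Lemma \ref{lemma1}(e) bounds the residual $\tfrac13(C_s\delta)^2\|\grad\Phi\|_3^3\le\tfrac13(C_s\delta)^2U^3L^2/h^2$, giving the $(C_s\delta/h)^2$ term.

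The heart of the argument, and the step I expect to be the main obstacle, is the nonlinear term $b(u^h,u^h,\Phi)$, which must be controlled \emph{uniformly in $h$}. One cannot imitate Theorem \ref{thm1}, where each piece was bounded by an $L^2$ quantity through Lemma \ref{lemma3} and absorbed into $\nu\|\grad u^h\|^2$ at the cost of the condition $h<\tfrac15\Rey^{-1}L$. Instead I would split $b(u^h,u^h,\Phi)$ as in (\ref{NLeq}) and estimate each of the four pieces by Hölder's inequality with the exponent triples $(6,3,2)$ and $(6,\tfrac32,6)$, then convert every $L^6$ factor of $u^h-\Phi$ into $CL^{1/2}\|\grad(u^h-\Phi)\|_3$ via Lemma \ref{lemma2} and bound the $\Phi$ factors by Lemma \ref{lemma1}(c),(f). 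After expanding $\|\grad(u^h-\Phi)\|_3\le\|\grad u^h\|_3+\|\grad\Phi\|_3$, each piece reduces to a constant $A$ times $\|\grad u^h\|_3^2$ or $\|\grad u^h\|_3$, and a final Young inequality with exponents $\tfrac32,3$ (respectively $3,\tfrac32$) swallows a small multiple of the cubic dissipation $(C_s\delta)^2\|\grad u^h\|_3^3$ into the left side, leaving a residual of size $\sim A^3/(C_s\delta)^4$. The governing residuals are the one generated by the factor $\|\grad\Phi\|_{3/2}\sim UL^{4/3}h^{-1/3}$ (Lemma \ref{lemma1}(f)), which produces $L^5/((C_s\delta)^4h)$, and the one generated by $\|\Phi\|_2\sim ULh^{1/2}$ (Lemma \ref{lemma1}(c)), which produces $L^{5/2}h^{3/2}/(C_s\delta)^4$; these are exactly the last two terms of the statement, while the remaining pieces contribute terms of the same or lower order. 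The genuinely delicate points are choosing the Hölder exponents so that the power of $\|\grad u^h\|_3$ never exceeds $3$ (so the cubic term can always absorb it) and tracking the powers of $L$ and $h$ correctly through each estimate.

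Finally I would collect all estimates, checking that the total fraction of $(C_s\delta)^2\|\grad u^h\|_3^3$ drawn off by the Smagorinsky boundary term and by the four nonlinear pieces is strictly less than $1$, which is arranged by taking the Young parameters small enough; then the left side dominates a positive multiple of $\nu\|\grad u^h\|^2+(C_s\delta)^2\|\grad u^h\|_3^3$. Dividing by $T\,|\Omega|=TL^3$, taking $\limsup_{T\to\infty}$ so that the $T$-bounded boundary and initial terms drop, and folding the positive left-hand constant into the generic $C$ then delivers the four-term bound for $\langle\varepsilon(u^h)\rangle$, which completes the proof.
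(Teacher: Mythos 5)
Your proposal is correct and, in its essentials, it reproduces the paper's own argument: the same background flow $\Phi$ supported in a strip of width equal to the actual mesh size, the same energy equality (\ref{T1-0}), the same treatment of the kinetic-energy, viscous and Smagorinsky boundary terms (giving the $\Rey^{-1}L/h$ and $(C_s\delta/h)^2$ contributions exactly as in (\ref{T2-3})--(\ref{T2-4})), and the same key mechanism for the nonlinearity: H{\"o}lder's inequality, the scaled Sobolev inequality of Lemma \ref{lemma2}, and Young's inequality to absorb a fixed fraction of $(C_s\delta)^2\Vert\grad u^h\Vert_3^3$ into the left side, which is precisely what removes the restriction on $h$. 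The one point where you take a different route is the decomposition of the trilinear term: you reuse the four-piece splitting (\ref{NLeq}) in terms of $u^h-\Phi$ from Theorem \ref{thm1}, whereas the paper returns to the definition of $b$ and splits into only two pieces in $u^h$ itself, $|b(u^h,u^h,\Phi)|\le\frac{1}{2}|(u^h\x\grad u^h,\Phi)|+\frac{1}{2}|(u^h\x\grad\Phi,u^h)|$ as in (\ref{NL}), estimated with H{\"o}lder exponents $(3,6,2)$ and $(6,\frac{3}{2},6)$. Both routes hit the same dominant residuals --- $\Vert\grad\Phi\Vert_{3/2}$ produces $L^5/((C_s\delta)^4h)$ and $\Vert\Phi\Vert_2$ produces $L^{5/2}h^{3/2}/(C_s\delta)^4$, exactly as you predict --- but your four-piece version additionally generates cross terms (e.g. $L^{1/2}\Vert\Phi\Vert_2\Vert\grad u^h\Vert_3\Vert\grad\Phi\Vert_3$) and pure-$\Phi$ residuals (e.g. $L^{1/2}\Vert\Phi\Vert_2\Vert\grad\Phi\Vert_3^2$), and your assertion that these are ``of the same or lower order'' is true but not free: one must verify, uniformly in $C_s\delta$ and $h$, that quantities like $(L/h)^{5/6}\,U^3/L$ are dominated by the stated four-term bound. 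This does hold --- for instance because $(C_s\delta/h)^2+L^5/((C_s\delta)^4h)\ge c\,(L/h)^{5/3}$ for every value of $C_s\delta$, by minimizing over $C_s\delta$ --- but it is an extra layer of bookkeeping. The paper's two-piece split buys exactly this economy (only the two $(C_s\delta)^{-4}$ residuals appear, so no domination argument is needed), while your version buys uniformity of presentation with Theorem \ref{thm1} at the cost of tracking and dominating those extra terms.
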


\begin{proof}
The proof is very similar to the one of Theorem \ref{thm1} except the estimation on the nonlinear term. Let  $h \in (0,L)$ be fixed from the  beginning. The strategy is to subtract off the inhomogeneous boundary conditions  (\ref{BC}).  The proof arises as well by taking $v^h=u^h-\Phi$ in the finite element problem (\ref{FEM-SM}). Then after integrating with respect to time, we get (\ref{T1-0}). The proof continues by estimating each term on the right-hand side of (\ref{T1-0}). Using Lemma \ref{lemma1} and the Cauchy-Schwarz-Young's inequality, the first three terms on the RHS of the energy equality (\ref{T1-0}) can be estimated as, 

\begin{equation}\label{T2-1}
(u^h(T),\Phi)\leq \frac{1}{2}\Vert u^h(T)\Vert^2 +\frac{1}{2}\Vert\Phi\Vert^2 = \frac{1}{2}\Vert u^h(T)\Vert^2 +\frac{1}{6} L^2 U^2 h.
\end{equation}
\begin{equation}\label{T2-2}
(u^h(0),\Phi)\leq \Vert u^h(0)\Vert \Vert\Phi\Vert = \sqrt{\frac{h}{3}} U L \Vert u^h(0)\Vert.
\end{equation}
\begin{equation}\label{T2-3}
\nu \int _{0}^{T} (\nabla u^h,\nabla\Phi) dt \leq \frac{\nu}{2} \int _{0}^{T}\Vert \nabla u^h\Vert^2 + \Vert \nabla\Phi\Vert ^2  dt = \frac{\nu}{2} \int _{0}^{T}\Vert \nabla u^h\Vert^2 dt + \frac{\nu}{2} \frac{U^2 L^2}{h} T.
\end{equation}
Next applying the Young's inequality,
$$ab \leq \frac{1}{p} a^p + \frac{1}{q} b^q,$$  
for conjugate $p=\frac{3}{2}$ and $q=3$  on $a=|\nabla u^h|^2$  and $b=|\nabla \Phi|$  gives, 

\begin{equation}\label{T2-4}
|\int_{\Omega} |\nabla u^h| \, \nabla u^h \, \nabla \Phi \, dx| \leq  \int_{\Omega} |\nabla u^h|^2 \, |\nabla \Phi| \, dx \leq \int_{\Omega} \big(\frac{2}{3}  |\nabla u^h|^3 +\frac{1}{3} |\nabla \Phi|^3\big) \, dx \leq  \frac{2}{3} \int_{\Omega}  |\nabla u^h|^3 dx + \frac{1}{3}\frac{U^3 L^2}{h^2}.
\end{equation}
Inserting (\ref{T2-1}), (\ref{T2-2}), (\ref{T2-3}) and (\ref{T2-4}) in (\ref{T1-0}) implies,

\begin{equation}\label{T2-5}
\begin{split}
 \frac{1}{2} \Vert u^h(T)\Vert ^2 -& \frac{1}{2} \Vert u^h(0)\Vert ^2 +\nu \int_{0}^{T} \Vert \nabla u^h\Vert ^2 dt +(C_s \delta)^2 \int _{0}^{T} (\int _\Omega  |\nabla u^h|^3 dx)dt  \leq  \frac{1}{2}\Vert u^h(T)\Vert^2 +\frac{1}{6}L^2 U^2 h \\
 &+ \sqrt{\frac{h}{3}} U L \Vert u^h(0)\Vert  + \frac{\nu}{2} \int _{0}^{T}\Vert \nabla u^h\Vert^2 dt + \frac{\nu}{2}  \frac{U^2 L^2}{h} T + \int_{0}^{T}b(u^h,u^h,\Phi) dt \\
 & + \frac{2}{3} (C_s \delta)^2  \int_{0}^{T}(\int_\Omega  |\nabla u^h|^3 dx)dt + \frac{1}{3} (C_s \delta)^2 \frac{U^3 L^2}{h^2}T.
\end{split}
\end{equation}\\

Finally the nonlinear term $b(u^h,u^h,\Phi)$ is estimated as follows. First from Definition (\ref{deftrilinear}), we have,

\begin{equation}\label{NL}
|b(u^h,u^h,\Phi)| \leq \frac{1}{2}| (u^h \x \grad u^h, \Phi) |+ \frac{1}{2} |(u^h \x \grad \Phi, u^h)|. 
\end{equation}
The first term on (\ref{NL}) can be estimated using  H{\"o}lder's inequality\footnote{$\int_\Omega |f\, g\, h| dx \leq ||f||_p\, ||g||_q\, ||h||_r$ for $\frac{1}{p}+\frac{1}{q}+\frac{1}{r}=1.$ } for  $ p=3, q=6$ and $r=2$,

$$| (u^h \x \grad u^h, \Phi) | \leq \int_\Omega |u^h\, \nabla u^h\, \Phi  | dx \leq ||\nabla u^h||_3\, ||u^h||_6\, ||\Phi||_2,$$
 then applying the following Young's inequality \footnote{More generally, for conjugate ($\frac{1}{p}+\frac{1}{q}=1$) exponents $a\geq 0, b\geq 0 : ab\leq \frac{\epsilon}{p} a^p+ \frac{\epsilon ^{-\frac{q}{p}}}{q} b^{q} $ for any $\epsilon \geq 0.$} for conjugate exponents $p=3$ and $q=\frac{3}{2}$,
 \begin{equation}\label{Young}
 ab\leq \frac{\epsilon}{3} a^3+ \frac{\epsilon ^{-\frac{1}{2}}}{\frac{3}{2}} b^{\frac{3}{2}},
  \end{equation}
 when $a=||\nabla u^h||_3$, $b=||u^h||_6\, ||\Phi||_2$ and $\epsilon= \frac{(C_s \delta)^2}{4}$ leads to,

$$|b(u^h,u^h,\Phi)| \leq  \frac{1}{12}(C_s \delta)^2  ||\nabla u^h||_3 ^3+ \frac{2}{3}(\frac{(C_s \delta)^2}{4})^{-\frac{1}{2}}\, ||\Phi||_2^{\frac{3}{2}}\, ||u^h||_6^{\frac{3}{2}}.$$
From Lemma \ref{lemma2} we have   $||u^h||_6 \leq C \, L^{\frac{1}{2}} ||\nabla u^h||_3$, it follows that 

$$|b(u^h,u^h,\Phi)| \leq  \frac{1}{12}(C_s \delta)^2  ||\nabla u^h||_3 ^3+ \frac{2}{3}(\frac{(C_s \delta)^2}{4})^{-\frac{1}{2}}\, ||\Phi||_2^{\frac{3}{2}}\, C {L}^{\frac{3}{4}}||\nabla u^h||_3^{\frac{3}{2}}.$$
Again apply the general Young's inequality for conjugate exponents $p=2$ and $q=2$ to the second term of the above inequality when $\epsilon = \frac{(C_s \delta)^2}{6}$, 
$$| (u^h \x \grad u^h, \Phi) | \leq  \frac{1}{6}(C_s \delta)^2  ||\nabla u^h||_3 ^3 + \frac{1}{12} (C_s \delta)^2  ||\nabla u^h||_3 ^3+  C \frac{8}{3} {L}^{\frac{3}{2}} (C_s \delta)^{-4} ||\Phi||_2^3. $$
Use $||\Phi||_2^3=(\frac{h}{3})^{\frac{3}{2}} L^3 U^3$ from the Lemma \ref{lemma1} on the above inequality and then,
\begin{equation}\label{NL-1}
| (u^h \x \grad u^h, \Phi) | \leq  \frac{1}{6}(C_s \delta)^2  ||\nabla u^h||_3 ^3 + C\, {L}^{\frac{3}{2}} (C_s \delta)^{-4} h^{\frac{3}{2}} L^3 U^3. \\
\end{equation}
The second term $|(u^h \x \grad \Phi, u^h)|$ on (\ref{NL}) can be bounded first using  H{\"o}lder's inequality for $p=6, q=\frac{3}{2} \mbox{ and } r=6,$
 
 $$| (u^h \x \grad \Phi , u^h ) | \leq \int_\Omega |u^h\, \nabla \Phi\, u^h  | dx \leq ||\nabla \Phi||_{\frac{3}{2}}\, ||u^h||^2_6.$$
Since $||u^h||_6 \leq  \, L^{\frac{1}{2}} ||\nabla u^h||_3$, Lemma \ref{lemma2}, we have,

 $$| (u^h \x \grad \Phi , u^h ) | \leq \int_\Omega |u^h\, \nabla \Phi\, u^h  | dx \leq ||\nabla \Phi||_{\frac{3}{2}}\, L\, ||\grad u^h||^2_3,$$
then use  Young's  inequality (\ref{Young}) with $a= ||\grad u^h||^2_3$ and $b= ||\nabla \Phi||_{\frac{3}{2}}\, L$ for $p=\frac{3}{2}, q=3$ and $\epsilon= \frac{3}{16} (C_s \delta)^2,$
 $$| (u^h \x \grad \Phi , u^h ) | \leq \frac{(C_s \delta)^2}{8} ||\grad u^h||^3_3+ \frac{16}{27} \frac{1}{(C_s \delta)^4}  ||\nabla \Phi||^3_{\frac{3}{2}}\, L^3.$$
Since $||\grad \Phi||_{\frac{3}{2}}^3= \frac{U^3 L^4}{h}$, Lemma   \ref{lemma1}, the above inequality turns to be,

\begin{equation}\label{NL-2}
| (u^h \x \grad \Phi , u^h ) | \leq \frac{(C_s \delta)^2}{8} ||\grad u^h||^3_3 + \frac{16}{27} \frac{U^3 \, L^7}{(C_s \delta)^4\, h},
\end{equation}
 combining (\ref{NL-1}) and (\ref{NL-2}) in (\ref{NL}), we have the following estimate on the non-linearity,
 \begin{equation}\label{NL-Bound}
|b(u^h,u^h,\Phi)| \leq \frac{7}{24}(C_s \delta)^2 ||\grad u^h||^3_3 +  \frac{U^3 \, L^7}{(C_s \delta)^4\, h}+ \frac{{L}^{\frac{3}{2}}  h^{\frac{3}{2}} L^3 U^3}{(C_s \delta)^{4}} .
 \end{equation}
 Inserting (\ref{NL-Bound}) in  (\ref{T2-5}) yields,

\begin{equation}\label{T2-7}
\begin{split}
 &\frac{1}{2} \int_{0}^{T} \nu \Vert \nabla u^h\Vert ^2 dt +\frac{1}{2} \int _{0}^{T} (\int _\Omega (C_s \delta)^2 |\nabla u^h|^3 dx)dt  \leq  \frac{1}{2} \Vert u^h(0)\Vert ^2+\frac{1}{6}L^2 U^2 h \\
 &+ \sqrt{\frac{h}{3}} U L \Vert u^h(0)\Vert  +  \frac{\nu}{2}  \frac{U^2 L^2}{h} T   + \frac{1}{3} (C_s \delta)^2 \frac{U^3 L^2}{h^2}T  + \frac{U^3 \, L^7}{(C_s \delta)^4\, h}\,T+ \frac{{L}^{\frac{3}{2}}  h^{\frac{3}{2}} L^3 U^3}{(C_s \delta)^{4}}\,T.\\
\end{split}
\end{equation}

Note that the above inequality can justify the fact that  the computed time-averaged of the energy dissipation of the solution (\ref{SM}) is uniformly bounded and hence $\langle  \varepsilon (u^h)\rangle$ is well-defined.

Dividing  both sides of the inequality (\ref{T2-7}) by $|\Omega|=L^3$ and $T$, taking lim Sup as $T\longrightarrow \infty$ leads to,
$$
\frac{1}{2} \langle  \varepsilon (u^h)\rangle \leq \frac{1}{2}\, \frac{\nu U^2}{ h L} +\frac{1}{3} (C_s \delta)^2 \frac{U^3}{h^2 L} + \frac{L^4\, U^3}{(C_s \delta)^4\, h }+ \frac{h^{\frac{3}{2}}\, L^{\frac{3}{2}}  \,U^3}{(C_s \delta)^4}, 
$$
which can be written as 
\begin{equation}\label{T2-8}
\langle  \varepsilon (u^h)\rangle \leq C\, \bigg[  \frac{1}{\Rey} \,\frac{L}{h}+   (\frac{C_s\, \delta}{h})^2+ \frac{L^5}{(C_s \delta)^4\,h}+\frac{L^{\frac{5}{2}}}{(C_s\, \delta)^{4}}\, {h^{\frac{3}{2}}}   \bigg] \, \frac{U^3}{L}.
\end{equation}
And the theorem is proved.
\end{proof}

\begin{re}
The estimate in Theorem \ref{thm2} is for a coarse mesh  $h\geq \mathcal{O}(\Rey^{-1}) \, L$. For the fully-resolved case $h\rightarrow \Rey^{-1}$ (or equivalently $C_s \delta \rightarrow 0$), the other estimate in Theorem \ref{thm1} takes over. More over, the estimate is  independent of  the viscosity at high Reynolds number. It is also dimensionally consistent. 
\end{re}

The value of $\langle  \varepsilon (u^h)\rangle $ in  Theorem \ref{thm2}   depends on three discretization parameters; the turbulence resolution length scale $\delta$, the Smagorinsky constant $C_s$ and the numerical resolution $h$. These affect $\langle  \varepsilon (u^h)\rangle $, but  have no impact on the underlying velocity field $u(x,t)$, and hence they have no effect upon $\langle  \varepsilon (u)\rangle $. 

Consider the upper bound on $\frac{\langle  \varepsilon (u^h)\rangle}{U^3/L}$  in (\ref{T2-8}) as function of  $(C_s \delta)$ and $h$, when $\Rey \gg 1$ and $L$ are being fixed. The expression includes three main terms.  The terms in the estimate are associated with physical effects as follows, 
\begin{enumerate}
\item $\lambda_1(h,C_s\delta)= \frac{1}{\Rey} \,\frac{L}{h} \hspace{73pt} \Longrightarrow \hspace{10pt} \mbox{Viscosity},$
\vspace{3pt}
\item  $\lambda_2(h,C_s\delta)=(\frac{C_s\delta}{h})^2\hspace{67pt} \Longrightarrow \hspace{10pt}\mbox{Model Viscosity},$
\vspace{3pt}
\item $\lambda_3(h, C_s\delta)= \frac{L^5}{(C_s \delta)^4\,h}+\frac{L^{\frac{5}{2}}}{(C_s\, \delta)^{4}}\, {h^{\frac{3}{2}}} \hspace{10pt}\Longrightarrow \hspace{10pt}\mbox{Non-linearity}.$
\end{enumerate}
Tracking back the proof of Theorem \ref{thm2}, $\lambda_1, \lambda_2$ and $\lambda_3$ correspond to the viscosity term $\nu \Delta u $, model viscosity $\grad \x((C_s \delta)^2 |\grad u| \grad u)$ and non-linear term $u\x \grad u$ respectively. 
 The three level sets of $\lambda_1=\lambda_2, \lambda_2=\lambda_3$ and $\lambda_1=\lambda_3$ are respectively denoted by the curves $\zeta_1, \zeta_2$ and $\zeta_3$ in  Figure \ref{3graphes}, which are calculated as,

 \begin{enumerate}
 \item $\zeta_1: \lambda_1=\lambda_2 \Longrightarrow C_s \delta = \Rey^{-\frac{1}{2}} \,  L\, (\frac{h}{L})^{\frac{1}{2}},$
\vspace{3pt}
\item $\zeta_2: \lambda_2=\lambda_3 \Longrightarrow C_s  \delta = L \,  \big[(\frac{h}{L})^{\frac{7}{2}}+ \frac{h}{L} \big]^{\frac{1}{6}},$
\vspace{3pt}
\item $\zeta_2: \lambda_1=\lambda_3 \Longrightarrow C_s  \delta = \Rey^{\frac{1}{4}} L \, \big[1+(\frac{h}{L})^{\frac{5}{2}} \big]^{\frac{1}{4}}.$
 
 \end{enumerate}

\begin{re}
Consider the horizontal axis to be $\frac{h}{L}$ and the vertical one to be $C_s \delta$.  The three level sets divide the $(\frac{h}{L})  \, (C_s \delta)$ - plane  into  four regions. The four regions $\RN{1}, \RN{2}, \RN{3}$ and $\RN{4}$ are identified with respect to the comparative magnitude size of $\lambda_1, \lambda_2$ and $\lambda_3$ on the $(\frac{h}{L})  \, (C_s \delta)$ - plane, Figure \ref{3graphes}. After comparing the magnitude of these three functions on each separate region, it can be seen that below the curve $\zeta_2$ (regions $\RN{1}$ and $\RN{2}$) the  effect of non-linearity term $ u \x \grad u$ on energy dissipation, corresponding to $\lambda_3$, dominates the  other terms. But above the curve $\zeta_2$ (regions $\RN{3}$ and $\RN{4}$) the model viscosity term, corresponding to $\lambda_2$, dominates. Surprisingly, the viscosity term $\nu \Delta u$ is never bigger than the other two terms for any choice of $h$, $\delta$ and $C_s$.
\end{re}

\begin{figure}
\includegraphics[scale=0.7]{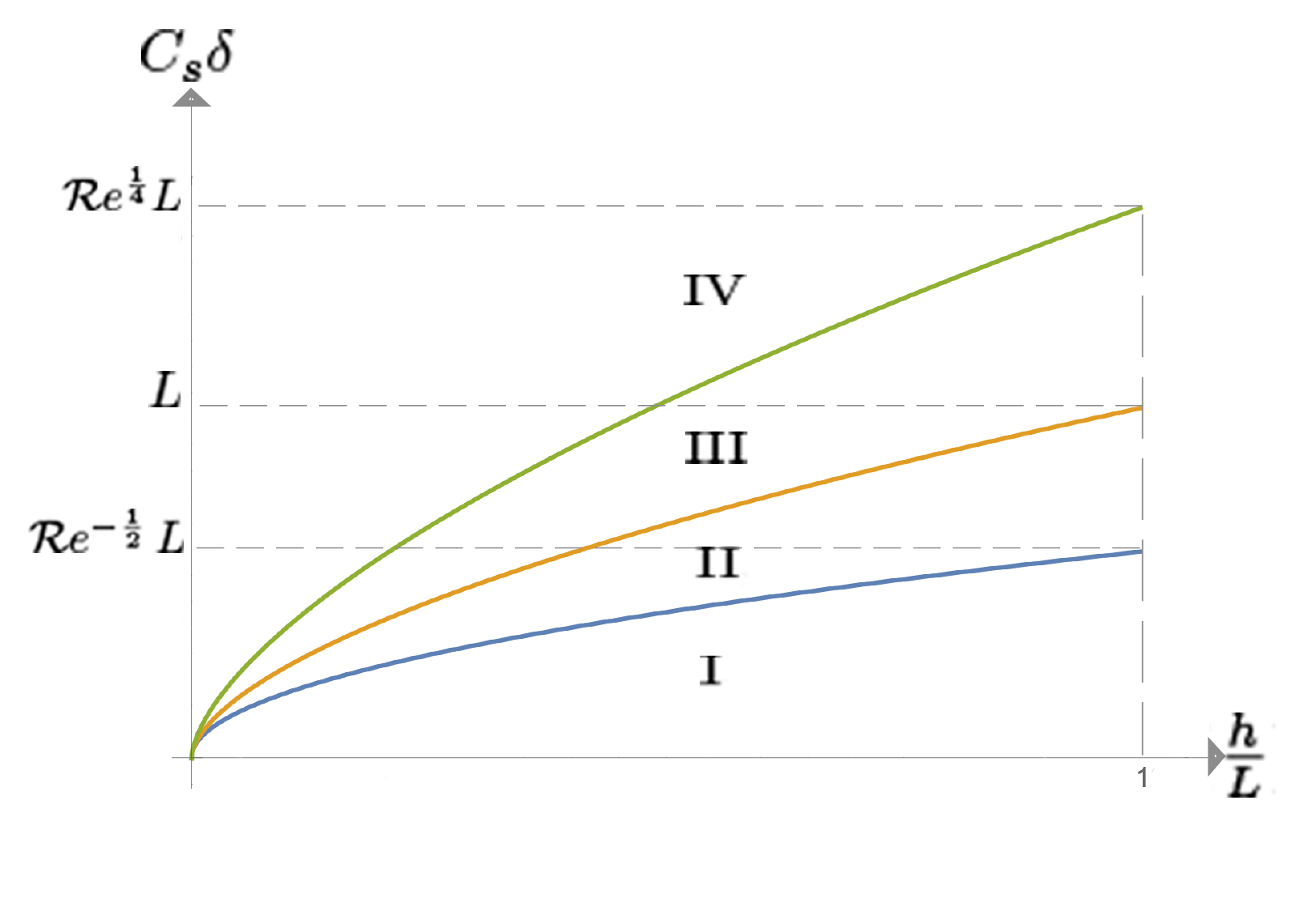}
\caption{Level Sets}
\label{3graphes}
\end{figure} 

 The Smagorinsky coefficient $C_s$ can be calibrated for a given class of flows. Its value varies from flow to flow and from domain to domain (see e.g. Page 23 of Galperin and Orszag \cite{GO93} who quote a range $0.000744 \leq C_s \leq 0.020$). In the next  corollary the optimal value, with respect the current analysis, of $C_s$  in the Theorem \ref{thm2} is investigated,  considering $\delta = h$ which is a common choice \cite{P04}.

\begin{cor}\label{Cor1}
Let $\delta = h \geq \mathcal{O} (\Rey^{-1})\, L$ be fixed. Then for any choice of $C_s > 0$, the upper estimate on $ \frac{\langle  \varepsilon (u^h)\rangle }{U^3/L} $ in the Theorem \ref{thm2} is larger than the  dissipation coefficient $C_{\epsilon}$ in Figure \ref{Cepsilon}.  
\end{cor}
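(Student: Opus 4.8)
The plan is to insert $\delta=h$ into the bound of Theorem \ref{thm2} and then optimize the resulting expression over the only remaining free parameter, $C_s$. Writing $a:=h/L\in(0,1)$, the substitution $\delta=h$ collapses the three bracketed families of terms into a function of $C_s$ alone,
\[
f(C_s)=C\Big[\tfrac{1}{\Rey}\,\tfrac{1}{a}+C_s^{2}+\tfrac{1}{C_s^{4}}\big(a^{-5}+a^{-5/2}\big)\Big],
\]
in which the first (viscous) term is independent of $C_s$, the model-viscosity term $C_s^{2}$ increases in $C_s$, and the two non-linearity terms decrease in $C_s$. Since the assertion is that this upper estimate exceeds $C_\epsilon$ for \emph{every} admissible $C_s>0$, it suffices to bound $f$ below by its infimum over $C_s>0$ and to verify that this infimum already surpasses $C_\epsilon$.

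First I would discard the nonnegative viscous term and concentrate on $g(C_s):=C_s^{2}+B\,C_s^{-4}$ with $B:=a^{-5}+a^{-5/2}$. The cleanest route is a weighted arithmetic--geometric mean inequality: writing $g(C_s)=\tfrac12 C_s^{2}+\tfrac12 C_s^{2}+B\,C_s^{-4}$ and applying AM--GM to the three summands gives $g(C_s)\ge 3\big(\tfrac14 B\big)^{1/3}=\tfrac{3}{2^{2/3}}\,B^{1/3}$, with equality at $C_s^{6}=2B$. Equivalently one may locate the interior critical point by solving $g'(C_s)=0$, i.e. $2C_s=4B\,C_s^{-5}$, and evaluate $g$ there; both yield the same value. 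Because $f$ blows up as $C_s\to0^{+}$ and as $C_s\to\infty$, this critical value is genuinely the global minimum.

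It then remains to bound $B^{1/3}$ from below. Since $0<a<1$ forces $a^{-5}>1$ and $a^{-5/2}>1$, we have $B>2$, and already $B>a^{-5}$ gives $g\ge \tfrac{3}{2^{2/3}}a^{-5/3}>\tfrac{3}{2^{2/3}}\approx1.89$. Combining, $\inf_{C_s>0}f(C_s)\ge C\cdot\tfrac{3}{2^{2/3}}$, an $\mathcal{O}(1)$ quantity that is independent of $\Rey$ and of the mesh and lies far above the physical value $C_\epsilon\simeq0.01$ read off from Figure \ref{Cepsilon}. Consequently $f(C_s)>C_\epsilon$ for all $C_s>0$, which is the claim. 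The hypothesis $h\ge\mathcal{O}(\Rey^{-1})L$ is used only to place us in the regime where Theorem \ref{thm2} supplies the governing bound and to keep the discarded viscous term $\tfrac{1}{\Rey a}$ of order one.

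The main obstacle here is interpretive rather than analytic: the comparison involves the unquantified generic constant $C$ of Theorem \ref{thm2}, so the rigorous content is that the \emph{minimal} value of the upper bound settles at an $\mathcal{O}(1)$ level, uniformly in $\Rey$, instead of decaying toward the observed $C_\epsilon$. As long as $C$ is order one (the Sobolev and Poincar\'e constants entering it are), the numerical inequality $C\cdot 3\cdot 2^{-2/3}>C_\epsilon$ holds comfortably. This is precisely why the introduction phrases the outcome as the estimate \emph{suggesting} over-dissipation: one compares an upper envelope for $\langle\varepsilon(u^h)\rangle$, optimized over the model parameter $C_s$, against a measured dissipation coefficient, and finds that even the best such envelope cannot be driven down to physical levels.
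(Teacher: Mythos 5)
Your proposal is correct and takes essentially the same route as the paper: substitute $\delta = h$, minimize the resulting bound over the single remaining parameter $C_s$, and observe that the minimum --- attained at $C_s^6 = 2B$ with $B=(L/h)^5+(L/h)^{5/2}$, of size $3\cdot 2^{-2/3}B^{1/3}\approx 1.89\,B^{1/3}$ (the paper rounds this to $2B^{1/3}$) --- lies far above $C_\epsilon$. Your AM--GM derivation of the minimizer and your uniform-in-$h$ floor of $3\cdot 2^{-2/3}$ are minor refinements of the paper's critical-point computation, which instead stresses that $B^{1/3}$ itself is huge for typical LES meshes (e.g.\ the minimum is about $2000$ when $h/L=0.01$).
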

\begin{proof}
Letting $\delta = h$, Theorem \ref{thm2} suggests,

\begin{equation}\label{Cor1-1}
 \frac{\langle  \varepsilon (u^h)\rangle }{U^3/L} \simeq \frac{1}{\Rey} \,\frac{L}{h}+   {C_s}^2+ \frac{1}{C_s^4} \big[ (\frac{L}{h})^5+ (\frac{L}{h})^{\frac{5}{2}}\big].
\end{equation}
Solving the minimization problem $(C_s)_{\min}= \operatorname*{arg\,min}_{C_s } F(C_s)$, the minimum of the function, 
\begin{equation}\label{Cor1-2}
 F(C_s )= \frac{1}{\Rey} \,\frac{L}{h}+   {C_s}^2+ \frac{1}{C_s^4} \big[ (\frac{L}{h})^5+ (\frac{L}{h})^{\frac{5}{2}}\big],
 \end{equation}
occurs at 
\begin{equation}\label{Cor1-3}
F_{\min}=\frac{1}{\Rey} \,\frac{L}{h}+ 2 \big[(\frac{L}{h})^5+ (\frac{L}{h})^{\frac{5}{2}}\big]^{\frac{1}{3}},
\end{equation}
for $(C_s)_{\min} = \sqrt[6]{2}\, [(\frac{L}{h})^5+(\frac{L}{h})^{\frac{5}{2}}]^\frac{1}{6}$, assuming $\mathcal{O}(\Rey^{-1})<\frac{h}{L}<1$ is fixed. This minimum amount  (\ref{Cor1-3}) dramatically exceeds the range of $C_{\epsilon}$ in Figure \ref{Cepsilon} for any typical choice of $h$ in LES.
\end{proof}

\begin{re}
Corollary \ref{Cor1} suggests  over-dissipation of  $\langle  \varepsilon (u^h)\rangle$    for any choice  of $C_s >0$. As an example, let $\frac{h}{L}=0.01$, then $F_{\min} \simeq 2000$  in (\ref{Cor1-3}) as $\Rey \rightarrow \infty$. It is much larger than the experimental range of  $C_s$ in Figure \ref{Cepsilon}, $\frac{1}{\Rey}\leq C_{\epsilon} \leq 0.1$, see Figure \ref{Campare}.

\end{re}

 However $\delta$ is taken to be $\mathcal{O}(h)$ in many literature \cite{P04},  the relationship between the grid size  $h$ and the filter size $\delta$ has been made based on heuristic instead of a sound numerical analysis (p.26 of \cite{BIL06}). Therefore for an LES of fixed computational cost (i.e. fixed $h$) and fixed length scale  $L$,  one can ask how  the artificial parameter $ C_s \,\delta$ should be selected such that the statistics of the model be consistent with numerical and experimental evidence summarized in Figure \ref{Cepsilon}. To answer this question, the inequality (\ref{T2-8}) takes over in the next corollary for under-resolved spatial  mesh $h \geq \mathcal{O}({\Rey}^{-1})\, L$,  since no model is used when (at much greater cost) a DNS is performed with $\frac{h}{L}\simeq \Rey^{-1}$.

\begin{figure}[t]
\includegraphics[scale=0.35]{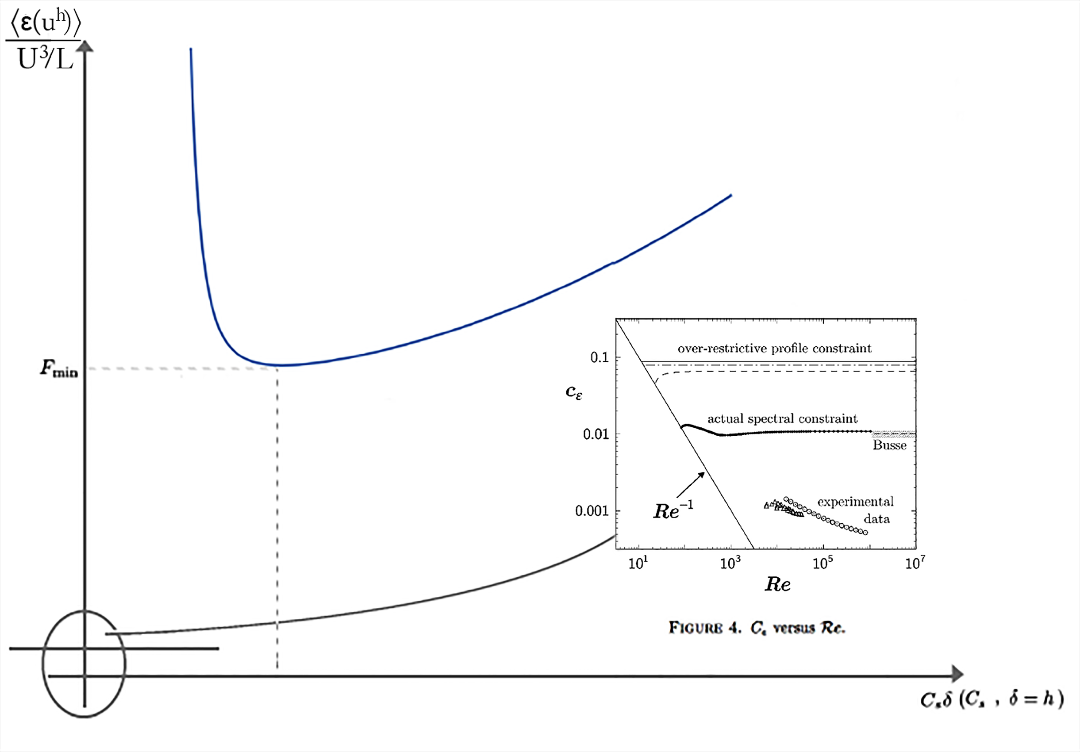}
\caption{Theorem \ref{thm2} vs experimental results in Figure \ref{Cepsilon}.}
\label{Campare}
\end{figure}

\begin{cor}\label{Cor2}
Let the mesh size $h \geq \mathcal{O} (\Rey^{-1}) \, L$ be fixed. Then for any choice of $C_s > 0$ and $\delta>0$, the upper estimate on $ \frac{\langle  \varepsilon (u^h)\rangle }{U^3/L} $ in the Theorem \ref{thm2} is larger than the dissipation coefficient $C_{\epsilon}$ in Figure \ref{Cepsilon}.  
\end{cor}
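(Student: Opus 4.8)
The plan is to exploit the single structural feature of the bound (\ref{T2-8}) that makes this corollary possible: the two model parameters $C_s$ and $\delta$ enter the right-hand side \emph{only} through the product $x:=C_s\,\delta$. Consequently, minimizing the estimate over all admissible pairs $(C_s,\delta)\in(0,\infty)^2$ is equivalent to minimizing a function of the single variable $x>0$, since as either parameter ranges over $(0,\infty)$ the product $x$ already sweeps out all of $(0,\infty)$. So first I would fix $h\geq\mathcal{O}(\Rey^{-1})\,L$ and $L$, combine the two non-linearity contributions, and write the quantity to be minimized as
$$
F(x)=\frac{1}{\Rey}\,\frac{L}{h}+\frac{x^2}{h^2}+\frac{h^4\,D}{x^4},\qquad D:=\Big(\tfrac{L}{h}\Big)^5+\Big(\tfrac{L}{h}\Big)^{\frac{5}{2}},
$$
in which the viscosity term $\frac{1}{\Rey}\frac{L}{h}$ is a positive constant independent of $x$ and therefore only inflates the bound.

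Next I would carry out the elementary one-variable minimization. Since $F(x)\to\infty$ both as $x\to 0^+$ and as $x\to\infty$, an interior minimizer exists; setting $F'(x)=0$ gives $x_{\min}=h\,(2D)^{\frac{1}{6}}$, and substituting back reproduces exactly the optimal value $F_{\min}$ already displayed in (\ref{Cor1-3}),
$$
F_{\min}=\frac{1}{\Rey}\,\frac{L}{h}+C\,\Big[\Big(\tfrac{L}{h}\Big)^5+\Big(\tfrac{L}{h}\Big)^{\frac{5}{2}}\Big]^{\frac{1}{3}},
$$
with $C$ a pure order-one constant. The point worth emphasizing here is that this is the very same optimization performed in Corollary \ref{Cor1}: imposing $\delta=h$ there forces $x=C_s h$, and letting $C_s$ range over $(0,\infty)$ covers the identical interval for $x$. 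Hence the extra freedom of choosing $\delta\neq h$ buys nothing, and the global infimum of the upper estimate over $(C_s,\delta)$ is precisely $F_{\min}$ of (\ref{Cor1-3}).

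Finally I would compare $F_{\min}$ with the experimentally and numerically observed range of the dissipation coefficient $C_\epsilon$ recorded in Figure \ref{Cepsilon}, namely $\Rey^{-1}\leq C_\epsilon\leq 0.1$. For every admissible mesh $\mathcal{O}(\Rey^{-1})<h/L<1$ the dominant term of $F_{\min}$ grows like $(L/h)^{\frac{5}{3}}$, which is already enormous at any practical LES resolution (for instance $h/L=0.01$ yields $F_{\min}\simeq 2\cdot 10^{3}$ as $\Rey\to\infty$) and therefore dwarfs the range of $C_\epsilon$. Since $F_{\min}$ is the \emph{smallest} value the upper estimate of Theorem \ref{thm2} can take over all $C_s>0$ and $\delta>0$, the estimate exceeds $C_\epsilon$ for every such choice, which is the claim. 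The hard part is essentially only conceptual rather than computational: one must confirm that no hidden constraint couples $C_s$ and $\delta$ beyond the product $C_s\delta$ appearing in (\ref{T2-8}), so that the two-parameter minimization genuinely collapses onto the one-parameter problem of Corollary \ref{Cor1}; once that is granted, the conclusion is immediate.
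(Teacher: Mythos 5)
Your proposal is correct and follows essentially the same route as the paper: the paper likewise treats the bound of Theorem \ref{thm2} as a function of the single variable $C_s\delta$, minimizes it to obtain $(C_s\delta)_{\min}=\sqrt[6]{2}\,h\,[(\frac{L}{h})^5+(\frac{L}{h})^{\frac{5}{2}}]^{\frac{1}{6}}$ and the same $F_{\min}$ as in (\ref{Cor1-3}), and then compares this minimum with the experimental range $\frac{1}{\Rey}\leq C_{\epsilon}\leq 0.1$. Your explicit remark that the two-parameter minimization collapses onto the one-variable problem of Corollary \ref{Cor1} is exactly what the paper does implicitly by writing the estimate as $G(C_s\delta)$.
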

\begin{proof}
Solving the minimization problem $(C_s \delta)_{\min}= \operatorname*{arg\,min}_{C_s \delta} G(C_s \delta)$, the minimum of the function, 
\begin{equation}\label{101}
 G(C_s \delta)=  \frac{1}{\Rey} \,\frac{L}{h}+   (\frac{C_s\, \delta}{h})^2+ \frac{L^5}{(C_s \delta)^4\,h}+\frac{L^{\frac{5}{2}}}{(C_s\, \delta)^{4}}\, {h^{\frac{3}{2}}},
 \end{equation}
occurs at, 

$$F_{\min}= \frac{1}{\Rey} \,\frac{L}{h}+ 2 \big[(\frac{L}{h})^5+ (\frac{L}{h})^{\frac{5}{2}}\big]^{\frac{1}{3}},$$ 
for $(C_s\delta)_{\min} = \sqrt[6]{2}\,h\, [(\frac{L}{h})^5+(\frac{L}{h})^{\frac{5}{2}}]^\frac{1}{6}$ assuming $h$  and $L$ are fixed. This minimum amount  which is much larger than the experimental range of  $C_{\epsilon}$, $\frac{1}{\Rey}\leq C_{\epsilon} \leq 0.1 $, leads to the  over-dissipation of the model, Figure \ref{Campare}.
\end{proof}

\begin{re}
The  suggested extra dissipation in the Corollary \ref{Cor1} and \ref{Cor2}, which is consistent with experience with the Smagorinsky model  (e.g., Iliescu
and Fischer \cite{IF04} and Moin and Kim \cite{MK82}), can laminarize the numerical approximation of a turbulent flow and prevent the transition to turbulence.
\end{re}

\begin{re}

 In the limit of high Reynolds number  and for a fixed computational cost $h \geq \mathcal{O}(\Rey^{-1}) \, L$, the estimate in  Theorem \ref{thm2} is the function of $C_s \delta$, 
 
  $$ \frac{\langle  \varepsilon (u^h)\rangle }{U^3/L} \simeq  (\frac{C_s\, \delta}{h})^2+ \frac{1}{(C_s \delta)^4}\, [\frac{L^5}{h}+L^{\frac{5}{2}}\, {h^{\frac{3}{2}}}],$$
which consists of two major parts. First,
$$\lambda_2= \mathcal{O}((C_s\delta)^2)$$
  is derived from the eddy viscosity term and is quadratic in $(C_s \delta)$. The latter, 
  $$\lambda_3= \mathcal{O}((C_s\delta)^{-4})$$ 
  is derived   from the non-linearity term and is  inversely proportional  to $(C_s \delta)^{4}$. Therefore, the behavior of the graph $\frac{\langle  \varepsilon (u^h)\rangle }{U^3/L}$  in  Figure \ref{Campare} is decided by the competition between the increasing function $\lambda_2$ and the decreasing function $\lambda_3$. This observation   suggests model over-dissipation is due to the action of the model viscosity, which is also consistent with \cite{L16}. Analysis in \cite{L16} suggested that the model over-dissipation is due to the action of the model viscosity in boundary layers rather than in interior small scales generated by the turbulent cascade.
 \end{re}

 \begin{figure}[b]
  \centering
  \begin{minipage}[b]{0.4\textwidth}
 \includegraphics[scale=0.26]{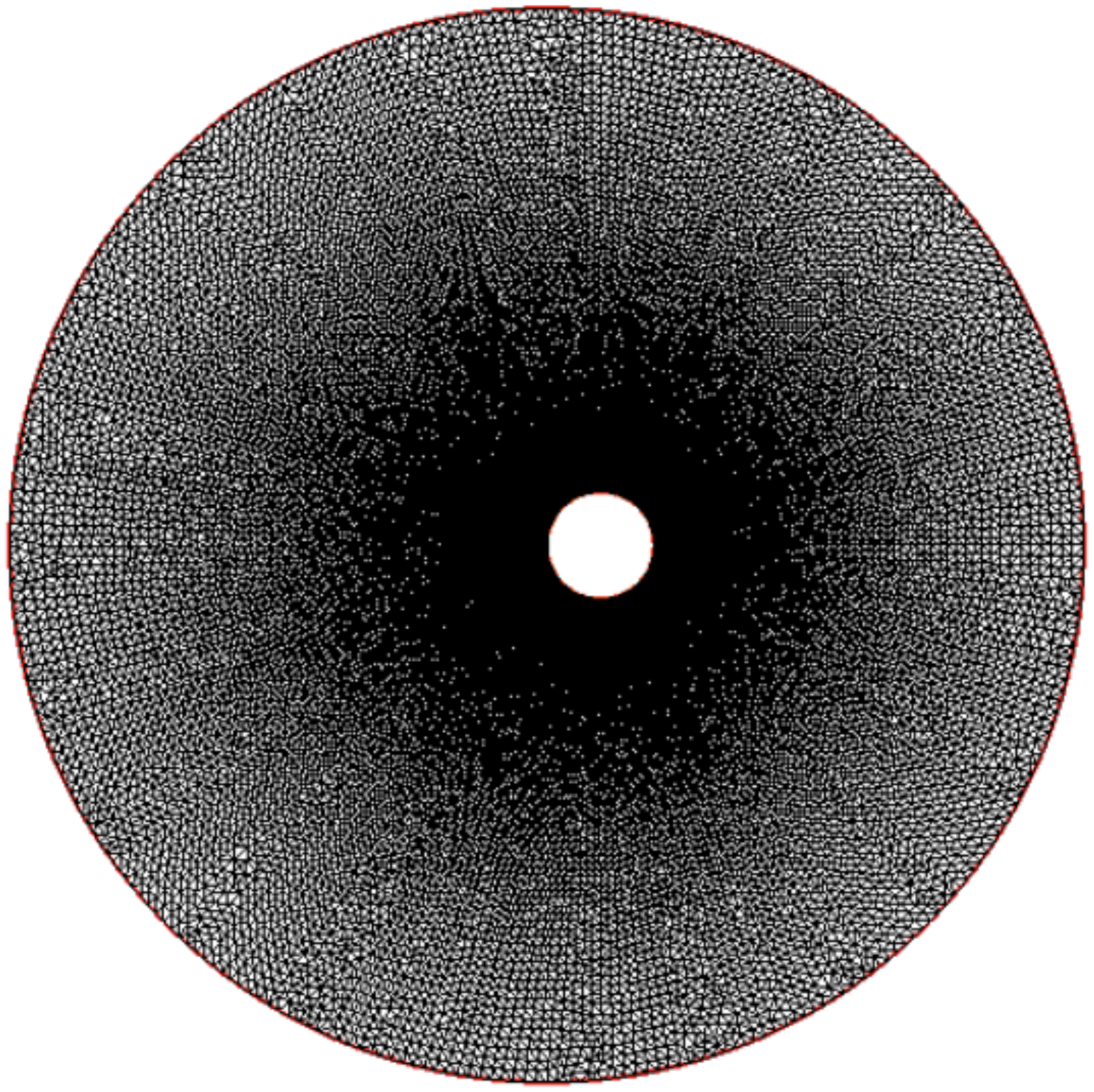}
\caption{m=300, n=200.}
\label{FineMesh}
  \end{minipage}
  \hfill
  \begin{minipage}[b]{0.4\textwidth}
    \includegraphics[scale=0.27]{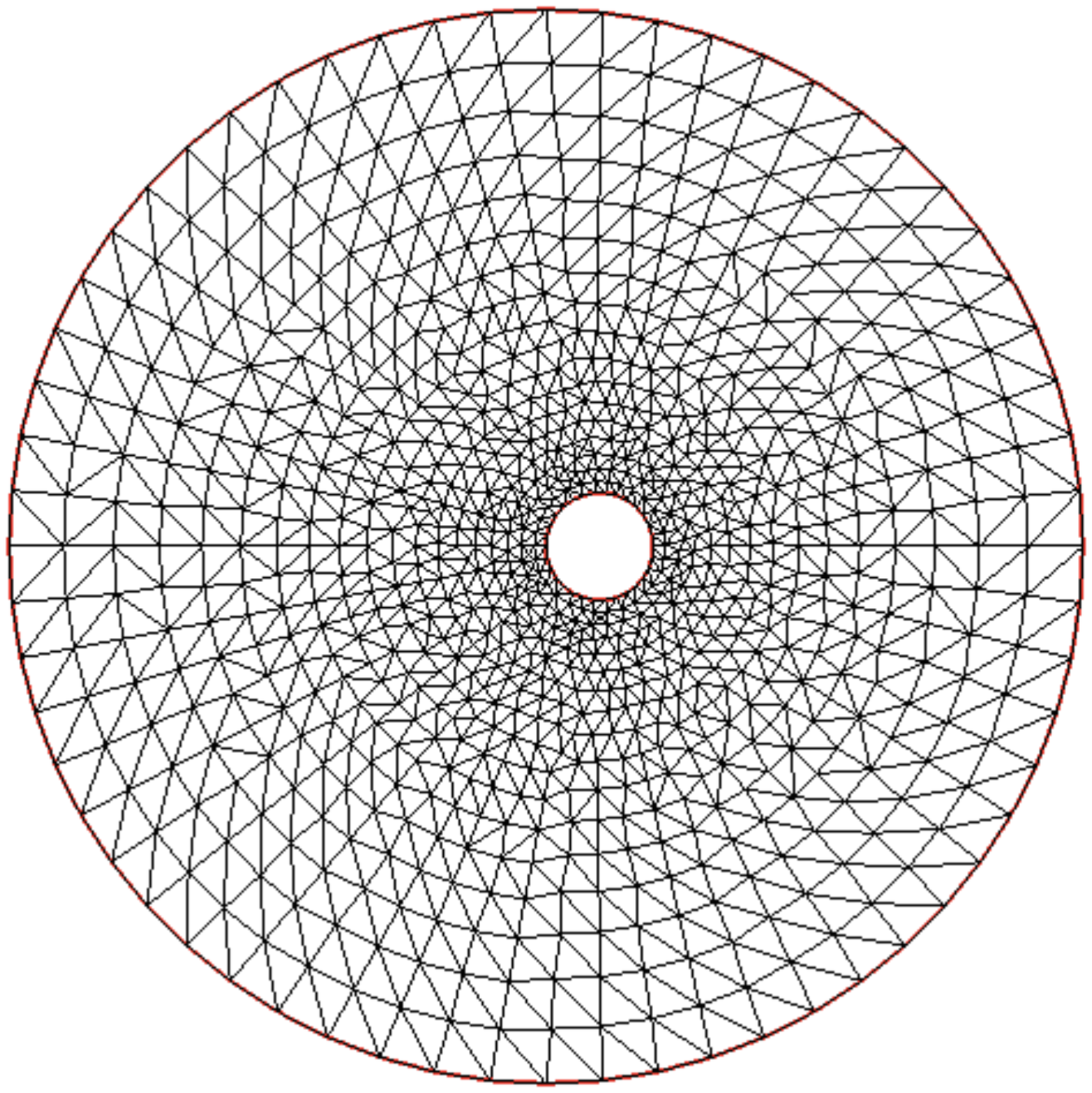}
    \caption{m=60, n=30}
    \label{CoarseMesh}
  \end{minipage}
  
\end{figure}

\section{Numerical Illustration}

The test is a  comparison between simulation of the NSE and the Smagorinsky model for two dimensional  time-dependent shear flow  between two  cylinders, motivated by the classical problem of flow between rotating cylinders. Motivated from the Taylor experiment, at high enough $\Rey$ we expect a steady flow lose its stability and the flow becomes  time-periodic. At still higher $\Rey$ a very complex and fully turbulence flow should be observed  (p. 91 of \cite{L08}). 

\begin{figure}[b]
  \centering
   \begin{minipage}{0.32\textwidth}
    \includegraphics[scale=0.2]{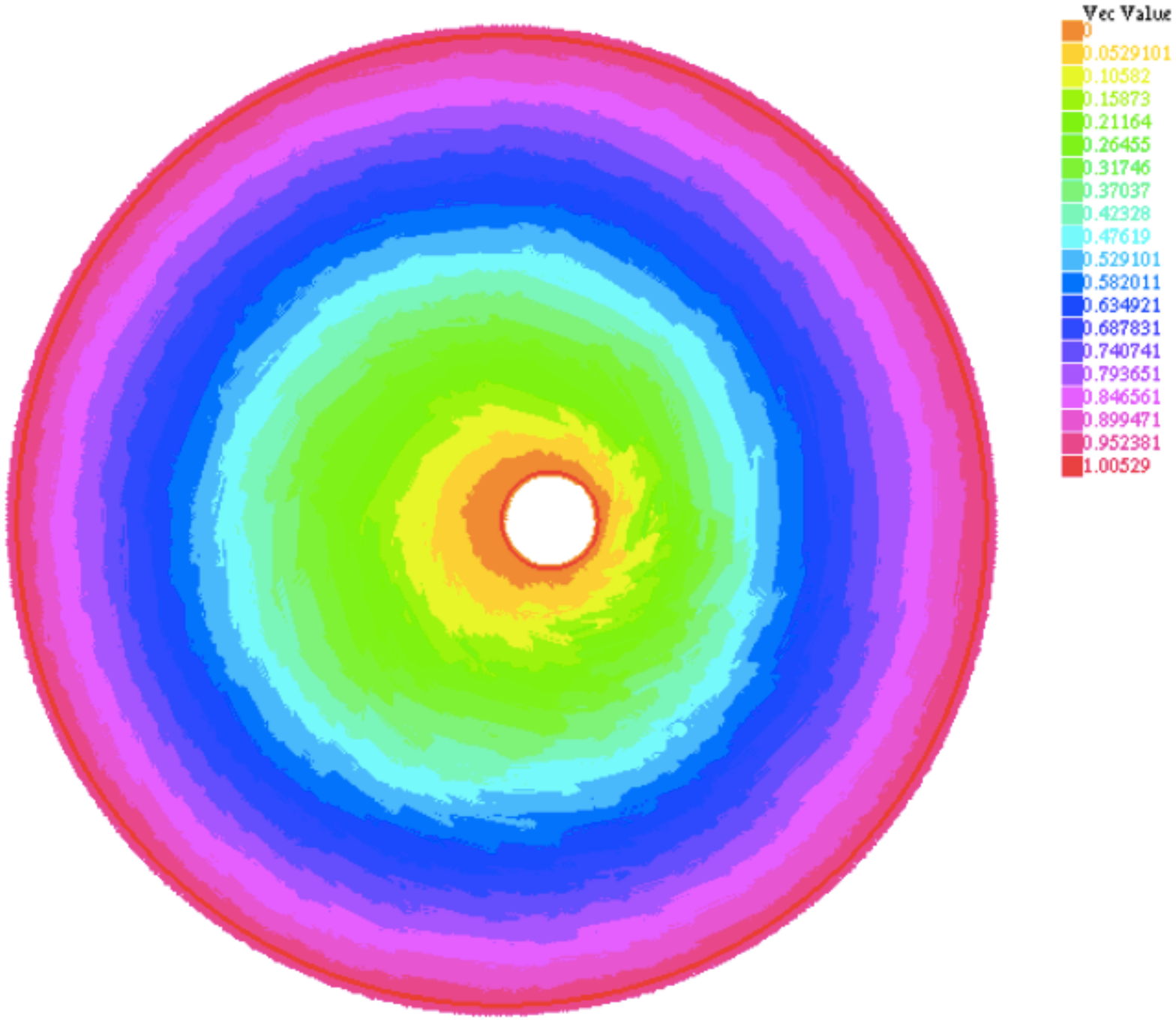}
    \caption{NSE; $T=0$.}
    \label{NSET0}
      \end{minipage}
 \vfill
  \begin{minipage}{0.32\textwidth}
    \includegraphics[scale=0.2]{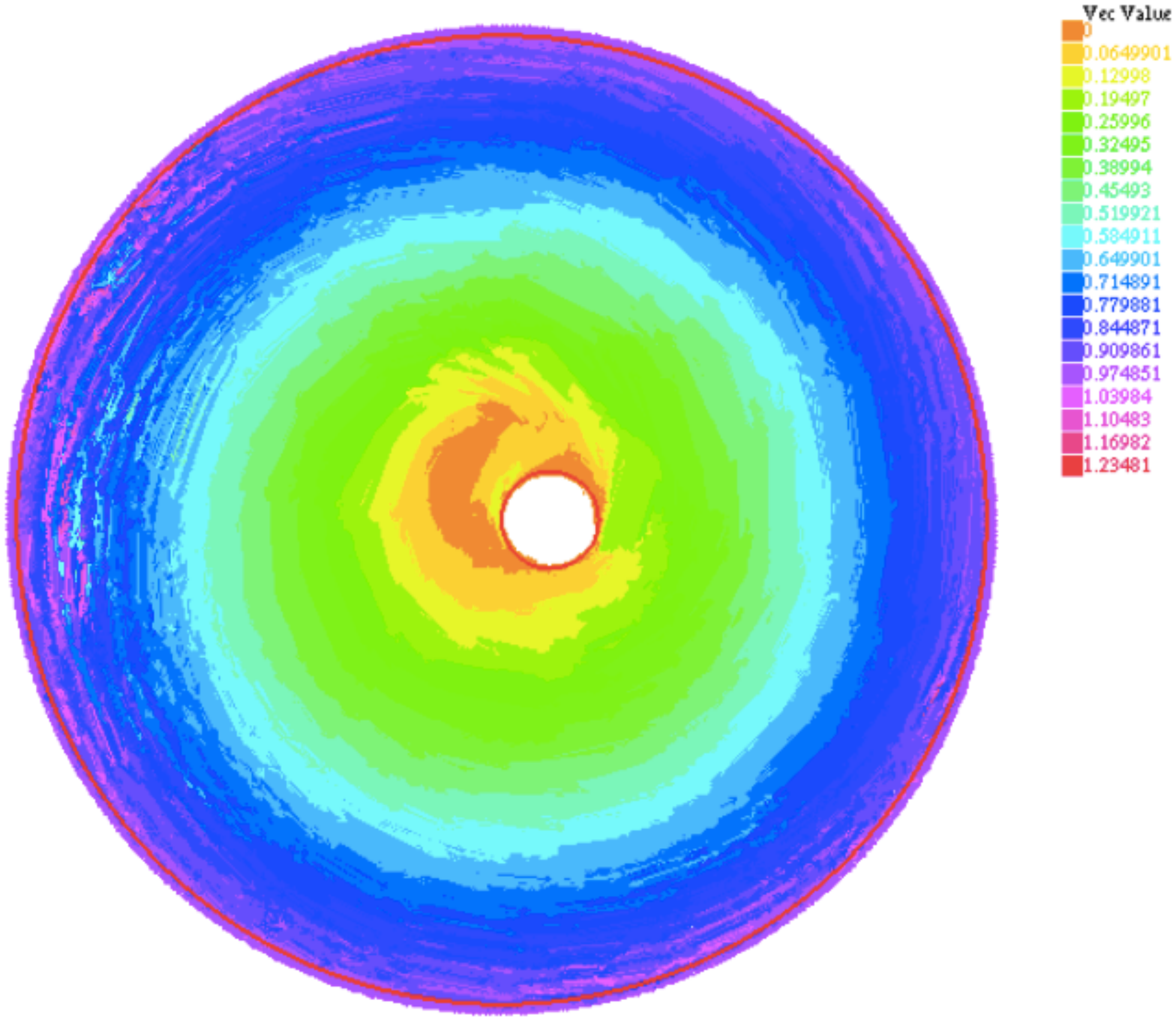}
    \caption{NSE; $T=1$.}
    \label{NSET1}
      \end{minipage}
 \hfill
 \begin{minipage}{0.32\textwidth}
    \includegraphics[scale=0.2]{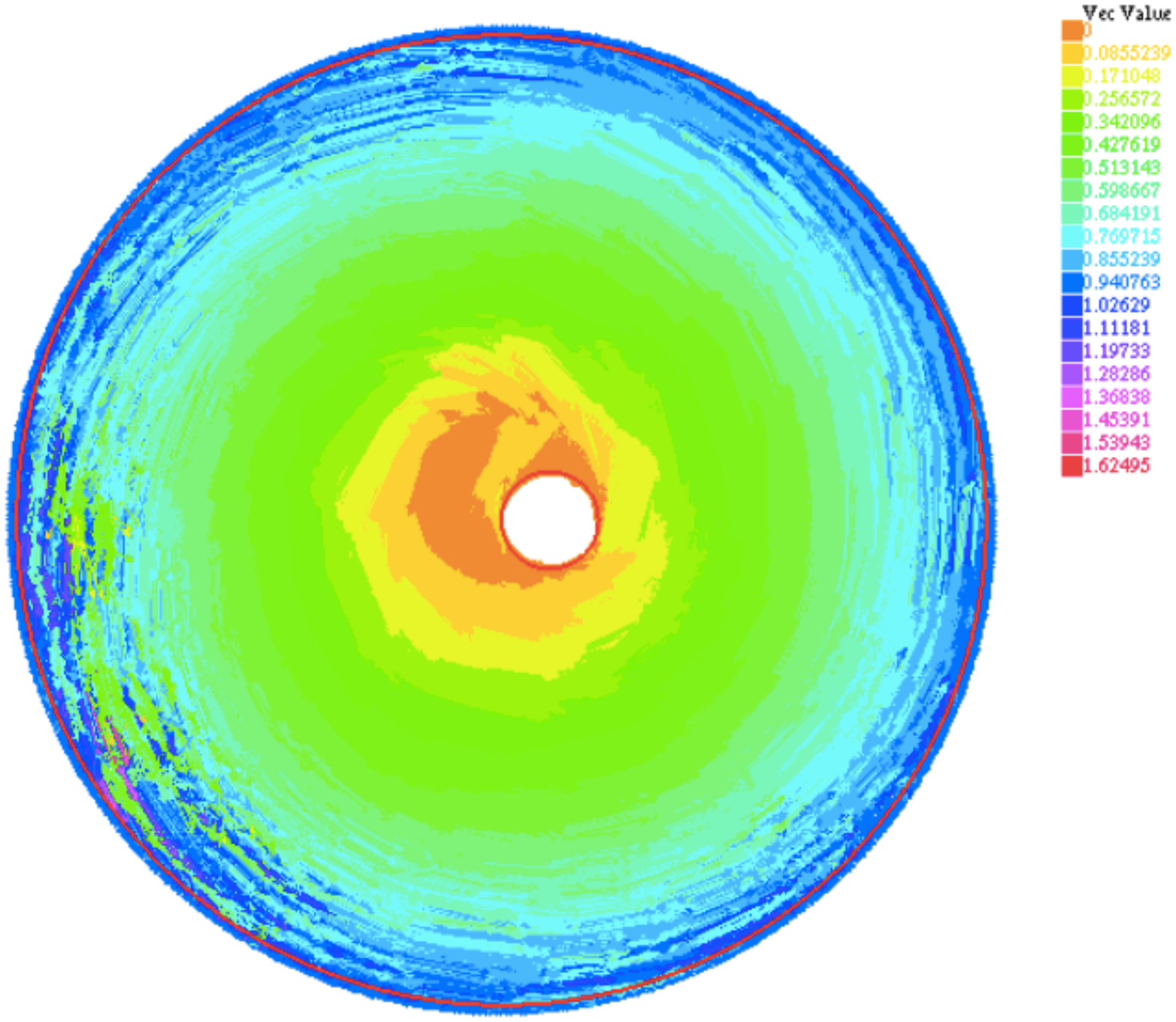}
    \caption{NSE; $T=3$.}
    \label{NSET3}
  \end{minipage}
   \hfill
 \begin{minipage}{0.32\textwidth}
    \includegraphics[scale=0.2]{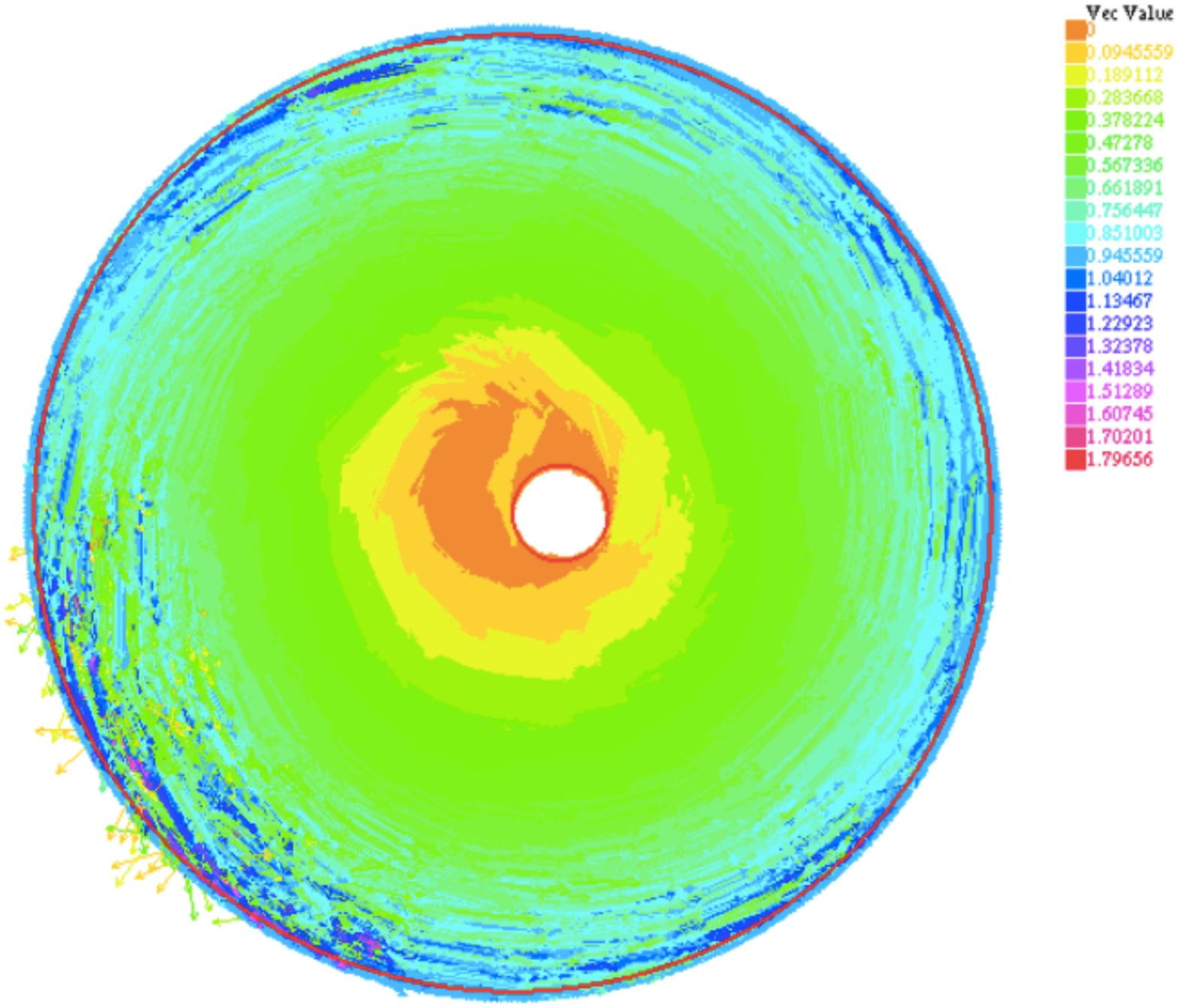}
    \caption{NSE; $T=5$.}
    \label{NSET5}
   \end{minipage}
   \vfill
    \begin{minipage}{0.32\textwidth}
    \includegraphics[scale=0.2]{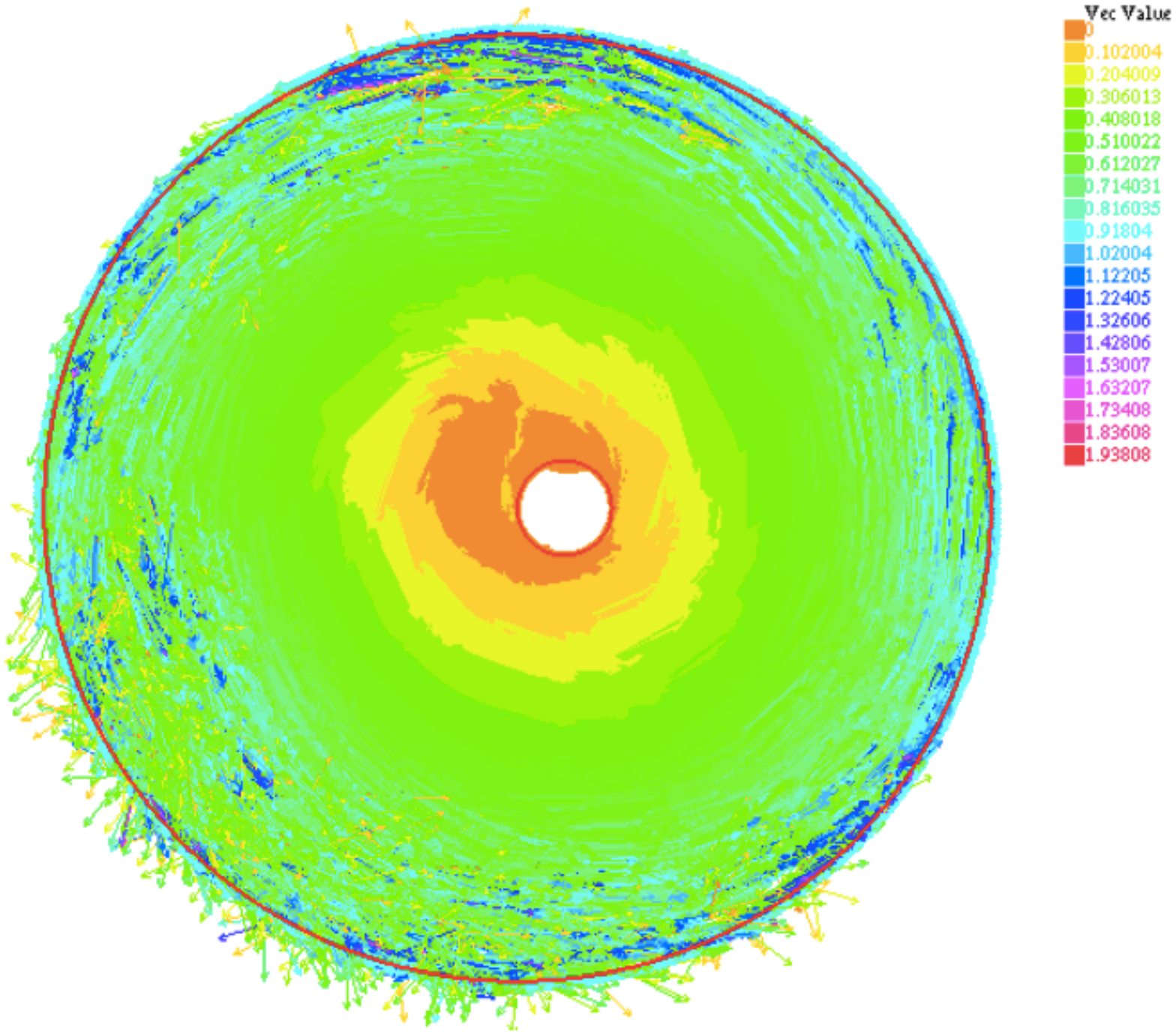}
    \caption{NSE; $T=7$.}
    \label{NSET7}
      \end{minipage}
 \hfill
 \begin{minipage}{0.32\textwidth}
    \includegraphics[scale=0.2]{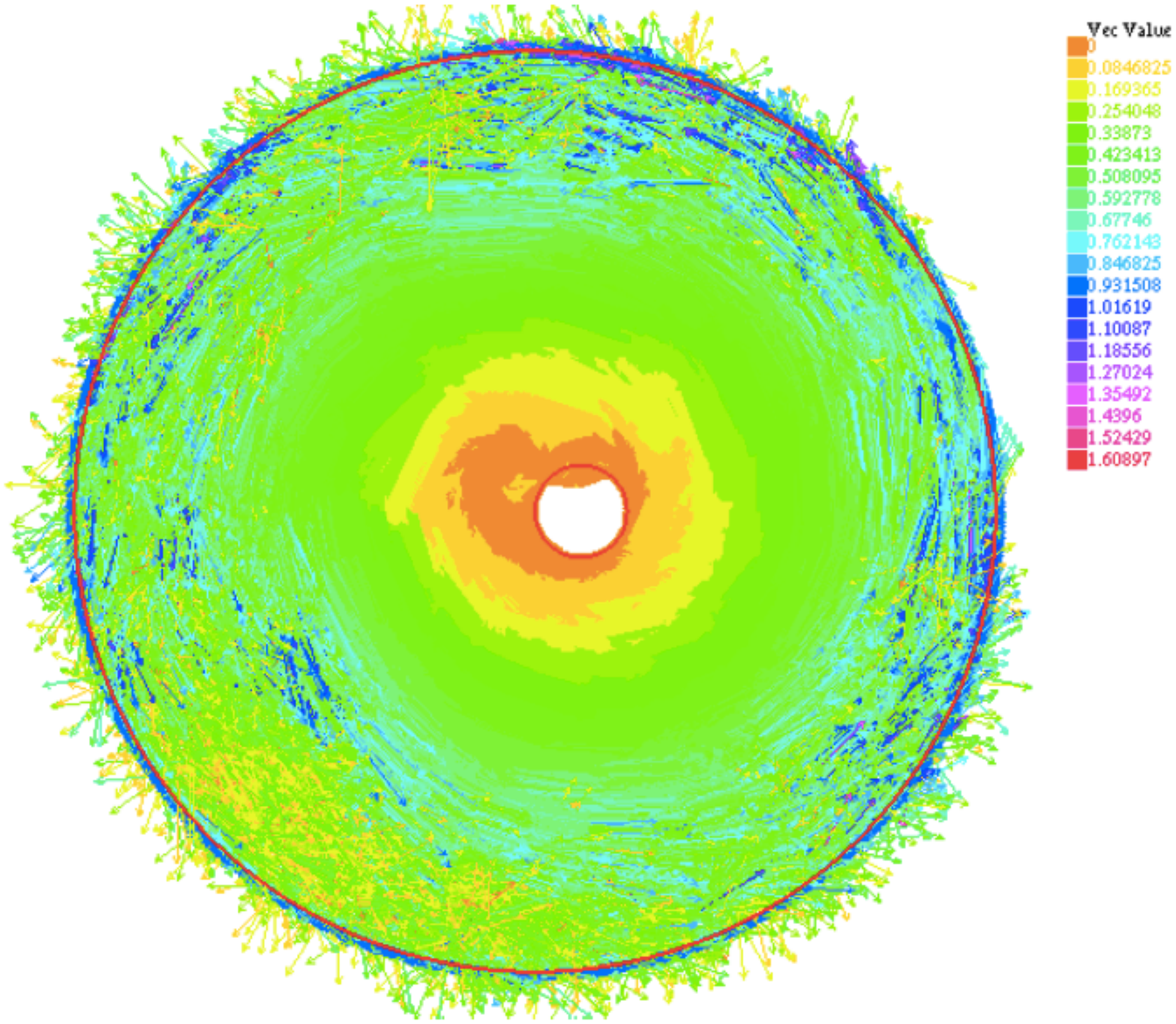}
    \caption{NSE; $T=9$.}
    \label{NSET9}
  \end{minipage}
   \hfill
 \begin{minipage}{0.32\textwidth}
    \includegraphics[scale=0.2]{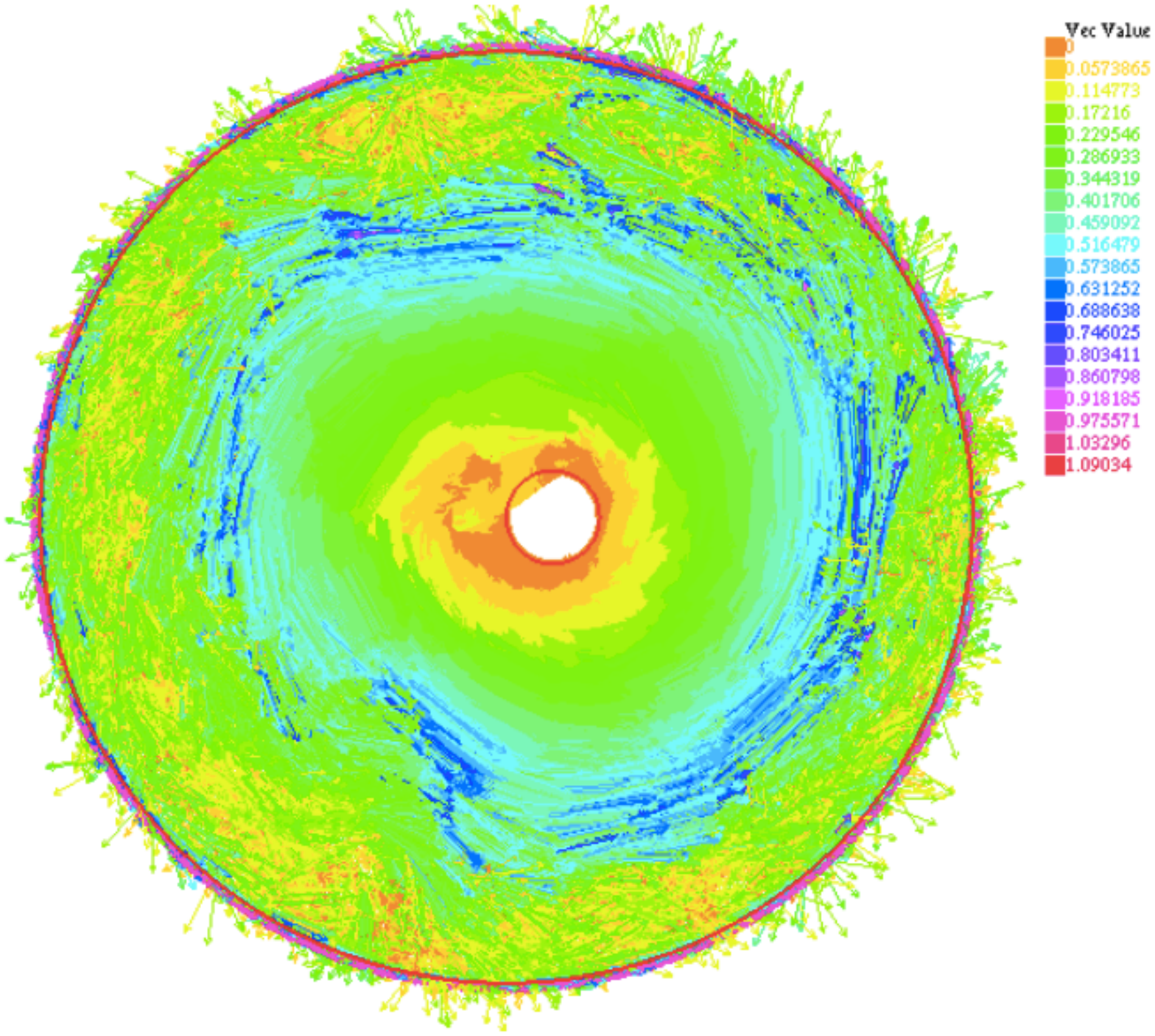}
    \caption{NSE; $T=10$.}
    \label{NSET10}
   \end{minipage}
    \end{figure}

The domain is a disk with a smaller off-centered  obstacle inside. The flow is driven by the rotational force at the outer circle in an absence of body force, with no-slip boundary conditions on the inner circle. The tests were performed using FreeFEM++  \cite{H12}, with Taylor-Hood elements (continuous piecewise quadratic polynomials for the velocity and continuous linear polynomials for the pressure) in all tests. We take $\Rey=4500$, final time $T=10$  and time step $\Delta t=0.01$.   The initial condition $u_0$ is generated by solving the steady Stokes problem with the same condition described above on the same geometry, this gives an  initial condition that is divergence free and satisfies the boundary  conditions (Figure \ref{NSET0}). The  Backward Euler method is utilized for  time discretization.

 For the  resolved NSE simulation, which is our \textit{truth} solution,   the outer and inner circles are discretized with 300 and 200 points respectively. The mesh is extended to all of the domain  as a Delaunay mesh (Figure \ref{FineMesh}). As  expected, flow does not approach a steady state and  the unsteady behavior of the velocity field especially near the boundaries can be seen (Figures \ref{NSET1} : \ref{NSET10}).  Note that in the absence of body force here, the most distinctive feature of this flow is due to the interaction of the flow with the outer boundary.

However, if the simulation is performed on a coarser mesh, it makes sense to try the Smagorinsky model  since the turbulence model is introduced to account for sub-mesh scale effects.  The under-resolved  mesh is parameterized by the number of mesh points m = 60 around the outer circle and n = 30 mesh points  around the inner circle and extended to all of the domain  as a Delaunay mesh, (Figure \ref{CoarseMesh}). Providing all other conditions hold as the NSE test,  we used $C_s= 0.17$  in the computations presented in this section (see \cite{L67}).   Motivated by Corollaries \ref{Cor1} and \ref{Cor2},  we consider  $\delta = h$  and $\delta= h^{\frac{1}{6}}$.  In both cases,  the model predicts the flow will quickly reach a nonphysical equilibrium, which  is clearly over-dissipated. This extra dissipation  laminarizes the  approximation of the flow  and prevents the transition to turbulence (Figures  \ref{SMT1}:\ref{SMT10}).


\begin{figure}[b]
  \centering
  \begin{minipage}{0.32\textwidth}
    \includegraphics[scale=0.23]{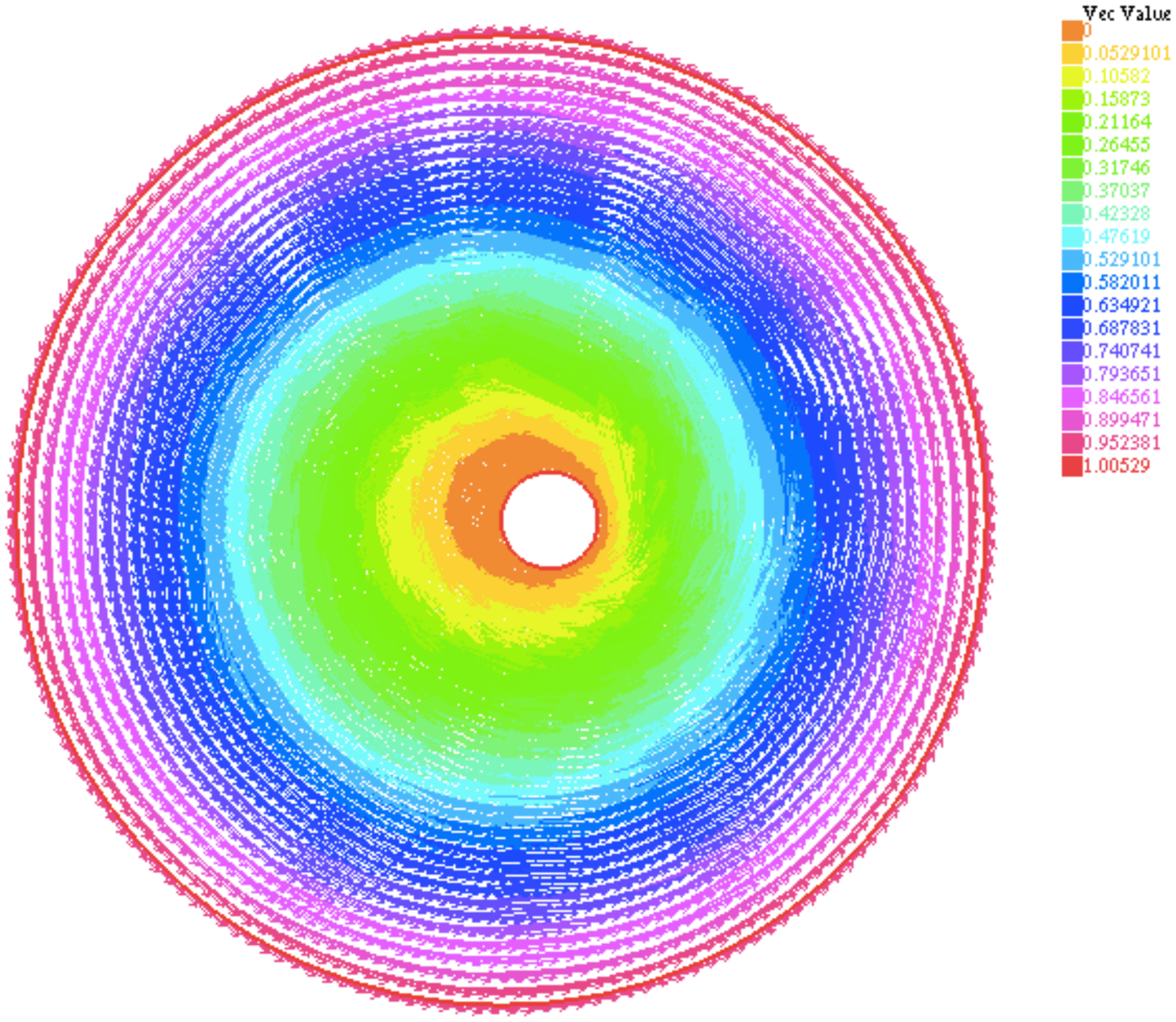}
    \caption{SM; $T=1$.}
    \label{SMT1}
      \end{minipage}
 \hfill
 \begin{minipage}{0.32\textwidth}
    \includegraphics[scale=0.23]{SM.pdf}
    \caption{SM; $T=7$.}
    \label{SMT7}
  \end{minipage}
   \hfill
 \begin{minipage}{0.32\textwidth}
    \includegraphics[scale=0.23]{SM.pdf}
    \caption{SM; $T=10$.}
    \label{SMT10}
   \end{minipage}
   
 \end{figure}

\section{Conclusion}

We investigate the \textit{computed}  time-averaged energy dissipation $\langle  \varepsilon (u^h)\rangle$  here for a shear flow turbulence on an under-resolved mesh. $\langle  \varepsilon (u^h)\rangle$ scales as the equilibrium dissipation law, $\frac{U^3}{L}$, for the under-resolved mesh independent of $\nu$ at high Reynolds number being considered (Theorem \ref{thm2}). The upper bound in Theorem \ref{thm2} does not give the correct dissipation for any choice of the Smagorinsky constant  $C_s>0$ and  filter size $\delta>0$ (Corollaries  \ref{Cor1} and \ref{Cor2}).  Comparing the results pictured in Figures  \ref{SMT1}:\ref{SMT10} with the NSE simulations (Figures \ref{NSET1} : \ref{NSET10}) give strong, although admittedly very preliminary, support for the fact that  the model introduces too much diffusion into the flow for any artificial parameters $C_s >0$ and $\delta$, which is consistent with Corollaries \ref{Cor1} and \ref{Cor2}.  

In addition, it is shown that the viscosity term $\nu \Delta u$ does not affect $\langle  \varepsilon (u^h)\rangle$ for any choice of discretization parameters. The next important step would be analyzing the computed energy dissipation rate for other turbulent models, e.g. \cite{NNGLP17}, on an under-resolved spatial  mesh.

The analysis here indicates that the model over-dissipation is due to the action of the eddy viscosity term  $\grad \x ((C_s \delta)\,|\grad u|\, \grad u)$.  The classical approach to correct the over-dissipation of the Smagorinsky model is to multiply the eddy viscosity term  with a damping function \cite{Van}. The mathematical analysis in \cite{A.P}  shows that the combination of the Smagorinsky with the van Driest damping does not over dissipate assuming infinite resolution (i.e. continues case). The unexplored question is: What is the statistics of this combination on an under-resolved mesh size?

Wang \cite{W00} shows the smallness of the energy dissipation rate due to the tangential derivative directly from the NSE for shear driven flow. Motivated by his work, one can study the effect of tangential derivative in the boundary layer for the combination of the Smagorinsky model and the damping function. Moreover, taking advantage of the analysis in  \cite{W00}  the energy  dissipation rate for the discretized NSE on the coarse mesh can be investigated. The open question  is how to  prove the boundedness of the kinetic energy of the approximate velocity, $||u^h||^2$, for the  NSE without any restriction on the mesh size $h$.

\bigskip

\noindent{\bf Acknowledgments.} 
I am extremely grateful to Professor William Layton  from whom I have  learned so much about fluid dynamics and numerical analysis. I would also like to acknowledge an unknown referee whose suggestions greatly helped to improve this paper. A.P. was Partially supported by NSF grants, DMS 1522267 and CBET 1609120.

\medskip

\end{document}